\newtheorem{thm}{Theorem}[section]
\newtheorem{cor}[thm]{Corollary}
\newtheorem{lemma}[thm]{Lemma}
\newtheorem{prop}[thm]{Proposition}
\theoremstyle{definition}
\newtheorem{defn}[thm]{Definition}
\theoremstyle{remark}
\newtheorem{remark}[thm]{Remark}
\numberwithin{equation}{section}
\numberwithin{figure}{section}
\def\Ran{{\mathrm{Ran\,}}}
\newcommand{\cH}{{\mathcal H}}
\newcommand{\cD}{{\mathcal D}}
\def\Span{{\mathrm{span\,}}}
\newcommand{\be}{\begin{equation}}
	\newcommand{\ee}{\end{equation}}
\newcommand{\bea}{\begin{eqnarray}}
	\newcommand{\eea}{\end{eqnarray}}
\newcommand{\clos}{\mbox{\rm clos}}
\newcommand{\ben}{\begin{eqnarray}}
	\newcommand{\een}{\end{eqnarray}}
\newcommand{\C}{\mathbb{C}}
\newcommand{\R}{\mathbb{R}}
\newcommand{\N}{\mathbb{N}}
\newcommand{\eps}{\varepsilon}
\renewcommand{\Re}{\operatorname{Re}}
\renewcommand{\Im}{\operatorname{Im}}
\newcommand{\llangle}{\left\langle}
\newcommand{\rrangle}{\right\rangle}
\DeclareMathOperator{\Imag}{Im}
\DeclareMathOperator{\Real}{Re}
\DeclareMathOperator{\spann}{span}
\def\idty{{\mathchoice {\mathrm{1\mskip-4mu l}} {\mathrm{1\mskip-4mu l}} %
		{\mathrm{1\mskip-4.5mu l}} {\mathrm{1\mskip-5mu l}}}}
\numberwithin{equation}{section}
\title[Complete non-selfadjointness]{Complete non-selfadjointness for Schr\"{o}dinger operators on the semi-axis}
\author[Fischbacher]{Christoph Fischbacher}
\address{Department of Mathematics,	Baylor University, Sid Richardson Bldg., 1410 S.\ 4th Street, Waco, TX 76706, USA}
\email{C\_Fischbacher@baylor.edu}
\author[Naboko]{Serguei Naboko}
\address{Department of Math.~Physics, Institute of Physics, St.~Petersburg State University, 1 Ulianovskaia, St.~Petergoff, St.~Petersburg, 198504, Russia}
\author[Wood]{Ian Wood}
\address{School of Mathematics, Statistics and Actuarial Sciences,
	University of Kent, Canterbury, CT2 7FS, UK}
\email{i.wood@kent.ac.uk}
\begin{document}
	
	\begin{abstract}
		In this note we investigate complete non-selfadjointness for all maximally dissipative extensions of a Schr\"odinger operator on a half-line with dissipative bounded potential and dissipative boundary condition. We show that all maximally dissipative extensions that preserve the differential expression are completely non-selfadjoint. However, it is possible for maximally dissipative extensions to have a one-dimensional reducing subspace on which the operator is selfadjoint. We give a characterisation of these extensions and the corresponding subspaces and present a specific example. 
	\end{abstract}
	\maketitle
	\begin{center}
		\today
	\end{center}
	\section{Introduction}
	
	We investigate complete non-selfadjointness for all maximally dissipative extensions of a Schr\"odinger operator on a half-line with dissipative bounded potential and dissipative boundary condition. An operator is completely non-selfadjoint if it has no non-trivial reducing subspace on which it acts as a selfadjoint operator; see Section \ref{sec:bg} for precise definitions.
	Complete non-selfadjointness is an important property of an operator on a Hilbert space which, in particular, plays a crucial role in the construction of a selfadjoint dilation of a maximally dissipative operator \cite{BMNW20,SFBK10}. 
	It is often a surprisingly difficult property to prove. 
	
	For limit-point Schr\"{o}dinger operators on a half-line with dissipative boundary condition, but real potential, constructions of the selfadjoint dilation and results on complete non-selfadjointness  can be found in Pavlov's work  \cite{Pav75, Pav76}. These results are extended to the limit circle case in \cite{All91}. Divergence-form operators on a bounded interval, again with real potentials, were treated in \cite{KNR03}. In 3-dimensional space, complete non-selfadjointness can be obtained for Schr\"{o}dinger operators with dissipative potentials using Holmgren's theorem, see \cite{Pav77}. Only fairly recently the result has been obtained for the one-velocity transport operator \cite{RT11}. Though the result will not be surprising to experts in the field, to the best of our knowledge there is no available proof of the property for a Schr\"odinger operator on a half-line with a dissipative potential and boundary condition, and we provide a proof here (see Theorem \ref{thm:sabc}).
	
	However, in this paper we go beyond maximally dissipative extensions that preserve the differential expression (so-called proper extensions for an appropriate choice of a dual pair, see, e.g.~\cite{Arl12,Fis17, proper,MM97, MM02}). It has long been known that dissipative operators can have non-proper maximally dissipative extensions \cite{CP68}. We refer to \cite{ Nonproper} for more on non-proper extensions; one result of that paper is a characterisation of maximally dissipative extensions for the half-line Schr\"{o}dinger operator which we make use of here, see \eqref{eq:mdoext}. We show 
	that it is possible for half-line Schr\"{o}dinger operators to have non-proper maximally dissipative extensions with a one-dimensional reducing subspace on which the operator is selfadjoint (see Theorems \ref{thm:kv} and \ref{thm:ev}). We give a characterisation of these extensions and the corresponding subspaces and, in the final section, we present a specific example. 
	\section*{Acknowledgements} Part of this work was done by CF and SN at the Institute Mittag-Leffler in Djursholm, Sweden, whose support and hospitality is gratefully acknowledged. It is also a pleasure to thank Fritz Gesztesy for providing useful references and Marius Mitrea for useful discussions and feedback. We are especially indebted to Matthias Hofmann who provided us with the proof of Lemma \ref{lemma:kerinv}, which allowed us to remove some technical conditions on the potential $V$.
	
	\section{Background}\label{sec:bg}
	
	The following notation will be used throughout this paper. We let $\cH$ be a complex Hilbert space with inner product $\langle\cdot,\cdot\rangle$, which we assume to be linear in the second and anti-linear in the first component. A linear operator on $\cH$ will typically be denoted by $A$, its domain by $\cD(A)$ and its range by $\Ran(A)$. We denote the (open) upper and lower half of the complex plane by $\C^+$ and $\C^-$, respectively.
	
	\begin{defn}
		A densely defined linear operator $A$ with domain $\cD(A)$ in $\cH$ is called \textit{dissipative} if $\Im\llangle u,Au\rrangle \geq 0$ for all $u\in \cD(A)$. $A$ is called \textit{anti-dissipative} if $(-A)$ is dissipative.
		Dissipative operators which have no non-trivial dissipative extensions are called \textit{maximally dissipative operators} (MDO).
	\end{defn} 
	
	An MDO consists of one part (corresponding to the set of eigenvectors of the real point spectrum) which looks like a selfadjoint operator and a remaining part. In many situations it makes sense to study the two parts separately. This idea leads to the introduction of the notion of completely non-selfadjoint operators (corresponding to the remaining part of the operator). We first need another definition.
	
	\begin{defn} Let $A$ be an operator on a Hilbert space $\mathcal{H}$. A closed subspace $\mathcal{M}\subset\mathcal{H}$ is called a \emph{reducing subspace} of $A$, or is said to \textit{reduce} the operator $A$, if
		\begin{equation}
			\mathcal{D}(A)=\mathcal{D}(A)\cap\mathcal{M}+\mathcal{D}(A)\cap\mathcal{M}^\perp,
		\end{equation}  
		and if
		\begin{equation}
			A(\mathcal{D}(A)\cap\mathcal{M})\subset\mathcal{M}\quad\mbox{and}\quad A(\mathcal{D}(A)\cap\mathcal{M}^\perp)\subset\mathcal{M}^\perp\:.
		\end{equation}
	\end{defn}
	
	\begin{remark}
		If ${A}$ is an MDO on $\mathcal{H}$, we know that $\C^-\subset\rho(A)$. Instead of working with the possibly unbounded operator $A$, it sometimes proves useful to work with the bounded resolvent $(A-\lambda)^{-1}$ instead. It is well-known that
		a closed subspace $\mathcal{M}$ reduces $A$ if and only if it reduces $(A-\lambda)^{-1}$ for any $\lambda\in\rho(A)$, and we will make use of this throughout the paper.
	\end{remark}
	
	For later purposes, we need the following result on density. 
	\begin{lemma} \label{lemma:spacedense} 
		Let $\mathcal{D}$ be dense in $\mathcal{H}$ and assume that there exists a closed subspace $\mathcal{M}$ such that 
		\begin{equation}
			\mathcal{D}=\mathcal{D}\cap\mathcal{M}\oplus\mathcal{D}\cap\mathcal{M}^\perp\:.
		\end{equation}
		Then, 
		\begin{equation} \label{eq:spaceclos}
			\overline{\mathcal{D}\cap\mathcal{M}}=\mathcal{M}\quad\mbox{and}\quad\overline{\mathcal{D}\cap\mathcal{M}^\perp}=\mathcal{M}^\perp\:.
		\end{equation}
	\end{lemma}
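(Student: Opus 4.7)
The plan is to exploit the orthogonal projection onto $\mathcal{M}$ to pass decompositions of approximants from $\mathcal{D}$ into decompositions lying in $\mathcal{D}\cap\mathcal{M}$ and $\mathcal{D}\cap\mathcal{M}^\perp$. Let $P$ denote the orthogonal projection of $\mathcal{H}$ onto $\mathcal{M}$; then $I-P$ is the orthogonal projection onto $\mathcal{M}^\perp$, and both are bounded operators of norm $\leq 1$.

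Fix $m\in\mathcal{M}$. By density of $\mathcal{D}$, pick a sequence $d_n\in\mathcal{D}$ with $d_n\to m$. By the hypothesis, each $d_n$ admits a decomposition $d_n=u_n+v_n$ with $u_n\in\mathcal{D}\cap\mathcal{M}$ and $v_n\in\mathcal{D}\cap\mathcal{M}^\perp$. Applying $P$ and using the fact that $Pu_n=u_n$ and $Pv_n=0$, we get $u_n=Pd_n$. By continuity of $P$,
\begin{equation*}
u_n = Pd_n \longrightarrow Pm = m.
\end{equation*}
Since each $u_n\in\mathcal{D}\cap\mathcal{M}$, this shows $m\in\overline{\mathcal{D}\cap\mathcal{M}}$, hence $\mathcal{M}\subseteq\overline{\mathcal{D}\cap\mathcal{M}}$. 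The reverse inclusion is immediate, because $\mathcal{D}\cap\mathcal{M}\subseteq\mathcal{M}$ and $\mathcal{M}$ is closed.

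The argument for the second identity is completely analogous: for $m^\perp\in\mathcal{M}^\perp$ and $d_n\in\mathcal{D}$ with $d_n\to m^\perp$, the decomposition $d_n=u_n+v_n$ gives $v_n=(I-P)d_n\to (I-P)m^\perp=m^\perp$, with $v_n\in\mathcal{D}\cap\mathcal{M}^\perp$.

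There is no real obstacle here; the statement is essentially a continuity argument. The only subtlety worth flagging is that one must remember that the algebraic direct sum assumption provides the decomposition $d_n=u_n+v_n$ with the two summands in the \emph{intersection} $\mathcal{D}\cap\mathcal{M}$ (respectively $\mathcal{D}\cap\mathcal{M}^\perp$), not merely in $\mathcal{M}$ (respectively $\mathcal{M}^\perp$). This is exactly what makes the closures in \eqref{eq:spaceclos} contain the target subspace rather than only some smaller subspace.
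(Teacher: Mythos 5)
Your proof is correct and follows essentially the same route as the paper: both decompose the approximating sequence $d_n=u_n+v_n$ according to the hypothesis and show the $\mathcal{M}$-component converges to $m$; you phrase this via continuity of the orthogonal projection $P$, while the paper uses the equivalent Pythagorean identity $\|m-d_n\|^2=\|m-\tilde d_n\|^2+\|d_n^\perp\|^2$.
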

	\begin{proof} We will only show the first identity in \eqref{eq:spaceclos}. Let $m\in\mathcal{M}$ be arbitrary. Since $\mathcal{D}$ is dense, there exists a sequence $\{d_n\}_{n=1}^\infty$ such that $d_n\rightarrow m$. For each $n\in\N$, decompose $d_n=\tilde{d}_n+d_n^\perp$, where $\tilde{d}_n\in\mathcal{D}\cap\mathcal{M}$ and $d_n^\perp\in\mathcal{D}\cap\mathcal{M}^\perp$. Since $\|m-d_n\|^2=\|m-\tilde{d}_n\|^2+\|d^\perp_n\|^2$, this implies that $\tilde{d}_n\rightarrow m$ and $d_n^\perp\rightarrow 0$. Consequently, $\{\tilde{d}_n\}_{n=1}^\infty$ is a sequence in $\mathcal{D}\cap\mathcal{M}$ that converges to $m$, which shows that $\overline{\mathcal{D}\cap\mathcal{M}}=\mathcal{M}$.
	\end{proof}
	
	We next define the property of MDOs that particularly interests us in this paper.
	
	\begin{defn} Let $A$ be an MDO.  $A$ is \textit{completely non-selfadjoint (cns)} if there exists no non-trivial reducing subspace $\cH_1\subseteq \cH$ such that $A\vert_{\cH_1}$ is selfadjoint.
	\end{defn}
	
	The following result gives an explicit formula for the completely non-selfadjoint part of the operator. In the case of a relatively bounded imaginary part the formula is simple. For more general situations the formula involves operators $\Delta$ and $\Delta_*$ which are regularisations of the (possibly non-existing) imaginary part of the operator. 
	
	\begin{prop}\label{prop:Langer} (Langer decomposition, see \cite{Lan61,Nab81}).
		Let $A$ be an MDO. Then there exists a unique decomposition of $\cH=\cH_{sa}(A)\oplus \cH_{cns}(A)$ into an orthogonal sum of two reducing subspaces for $A$ such that $A\vert_{\cH_{sa}(A)}$ is selfadjoint in $\cH_{sa}(A)$ and  $A\vert_{\cH_{cns}(A)}$ is completely non-selfadjoint in $\cH_{cns}(A)$.
		
		Let 
		\be\label{Cayley} T=I-2i(A+i)^{-1} = (A-iI)(A+iI)^{-1}\ee
		be the Cayley transform of $A$,
		define
		\ben\label{Delta}
		\Delta&=& I-T^*T \ =\ 2i\left[(A+i)^{-1}-(A^*-i)^{-1}+2i (A^*-i)^{-1}(A+i)^{-1}  \right],
		\een
		\ben\label{Deltastar}
		\Delta_*&=& I-TT^* \ =\  2i\left[(A+i)^{-1}-(A^*-i)^{-1}+2i (A+i)^{-1}(A^*-i)^{-1} \right]
		\een
		and set
		$$\mathcal{M}:= \Ran(\Delta) +\Ran(\Delta_*)\subseteq \cH.$$
		Then the completely non-selfadjoint part $\cH_{cns}(A)$ is given by the closure of the linear span of $\mathcal{M}$ developed by appropriate resolvents, namely
		$$\cH_{cns}(A) = \clos\left(\mathrm{Span}_{\Im\lambda< 0} \{ (A-\lambda)^{-1}\mathcal{M}\}+\mathrm{Span}_{\Im\lambda> 0} \{ (A^*-\lambda)^{-1}\mathcal{M}\} \right).
		$$
		
		If $A$ has relatively bounded imaginary part, i.e.~$A=L+iV$ with $L=L^*$, $V\geq 0$, $V$ relatively $L$-bounded, then there is a simple explicit expression for the completely non-selfadjoint part $\cH_{cns}(A)$:
		$$\cH_{cns}(A) = \clos\left(\mathrm{Span}_{\Im\lambda\neq 0} \{ (L-\lambda)^{-1}\Ran V\} \right) 
		= \clos\left(\mathrm{Span}_{\lambda\notin(\sigma(A)\cup\R)} \{ (A-\lambda)^{-1}\Ran V\} \right),$$
		i.e.~$\cH_{cns}(A)$ is generated by the range of the imaginary part $V$ developed by the resolvent of the operator $A$ or its real part $L$. Moreover, $A\vert_{\cH_{sa}(A)}=L\vert_{\cH_{sa}(A)}$.
	\end{prop}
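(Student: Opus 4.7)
The plan is to reduce the statement to a canonical decomposition of the contraction $T = (A-iI)(A+iI)^{-1}$ (well-defined on $\cH$ because $A$ is maximally dissipative). The key translation is that a closed subspace $\cH_1 \subseteq \cH$ reduces $A$ with $A\vert_{\cH_1}$ selfadjoint if and only if it reduces $T$ with $T\vert_{\cH_1}$ unitary. Existence and uniqueness of the orthogonal splitting $\cH = \cH_{sa}(A) \oplus \cH_{cns}(A)$ then follow from the canonical decomposition of a contraction into its unitary and completely non-unitary parts, applied to $T$.

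To characterise $\cH_{cns}(A)$, observe first that since $T\vert_{\cH_{sa}(A)}$ is unitary, the defect operators $\Delta = I - T^*T$ and $\Delta_* = I - TT^*$ vanish on $\cH_{sa}(A)$. Hence $\mathcal{M} = \Ran(\Delta) + \Ran(\Delta_*) \subseteq \cH_{sa}(A)^\perp = \cH_{cns}(A)$. Conversely, if $\cH_2$ is a closed reducing subspace for $A$ orthogonal to $\mathcal{M}$, then $\Delta\vert_{\cH_2} = 0 = \Delta_*\vert_{\cH_2}$, so $T\vert_{\cH_2}$ is both an isometry and a co-isometry, hence unitary; by maximality, $\cH_2 \subseteq \cH_{sa}(A)$. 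Thus $\cH_{cns}(A)$ is the smallest closed reducing subspace containing $\mathcal{M}$, equivalently the closed linear span of $T^n \mathcal{M}$ and $T^{*n}\mathcal{M}$ for $n \geq 0$.

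To rewrite this in terms of resolvents, note that $(A+i)^{-1} = (I-T)/(2i)$ and $(A^*-i)^{-1} = (T^*-I)/(2i)$, so iterated application of $T$ and $T^*$ to $\mathcal{M}$ spans the same closed set as iterated application of these two resolvents. Expanding $(A-\lambda)^{-1}$ in a Neumann series around $\lambda = -i$ and invoking operator-valued analyticity of $\lambda \mapsto (A-\lambda)^{-1}$ on $\C^-$ shows that the closed span of $(A-\lambda)^{-1}m$ over $\Im \lambda < 0$ coincides with the closed span of $(A+i)^{-n} m$ for $n \geq 0$; the same reasoning applied to $A^*$ on $\C^+$ yields the formula in the statement.

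Finally, for the simplification when $A = L + iV$ with $V \geq 0$ relatively $L$-bounded: if $u \in \cD(A) \cap \cH_{sa}(A)$, then selfadjointness of $A\vert_{\cH_{sa}}$ gives $\langle u, Vu \rangle = \Im \langle u, Au \rangle = 0$, so $Vu = 0$ by positivity of $V$. Lemma \ref{lemma:spacedense} upgrades this to $\cH_{sa}(A) \subseteq \ker V$, giving $A\vert_{\cH_{sa}} = L\vert_{\cH_{sa}}$ and $\Ran V \subseteq \cH_{cns}(A)$. A short direct check shows that $\cH_{sa}(A)$ also reduces $L$ and $V$ separately, so $\cH_{cns}(A)$ is invariant under $(L-\lambda)^{-1}$ for $\Im\lambda \neq 0$; this yields the inclusion $\clos(\Span_{\Im \lambda \neq 0} (L-\lambda)^{-1}\Ran V) \subseteq \cH_{cns}(A)$. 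The reverse inclusion follows by showing that the orthogonal complement of this closed span lies in $\ker V$ and is reducing for $L$, hence is a reducing subspace for $A$ on which $A = L$ is selfadjoint, and therefore contained in $\cH_{sa}(A)$. The main obstacle I expect is the span-identification in the third step: one must verify carefully that restricting $\lambda$ to the open lower or upper half-plane does not shrink the relevant closed linear span.
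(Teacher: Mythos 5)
The paper offers no proof of this proposition at all: it is quoted as a classical result with references to \cite{Lan61,Nab81}, so there is no in-paper argument to compare against. Your outline reconstructs the standard proof and is essentially sound: pass to the Cayley transform $T$, which is an everywhere-defined contraction precisely because $A$ is maximally dissipative; observe that a closed subspace reduces $A$ with selfadjoint restriction if and only if it reduces $T$ with unitary restriction; and invoke the Sz.-Nagy--Foias canonical decomposition of a contraction into its unitary and completely non-unitary parts. Three points in your sketch need more care than you give them. First, identifying the smallest closed reducing subspace containing $\mathcal{M}$ with $\clos\,\spann\{T^n\mathcal{M},\,T^{*n}\mathcal{M}: n\geq 0\}$ is \emph{not} a generality about reducing subspaces (for an arbitrary set one needs all words in $T$ and $T^*$, not just pure powers); it holds here because of the specific structure of the defect operators, via the intertwining relations $T\Delta=\Delta_*T$, $\Delta T^*=T^*\Delta_*$, or equivalently via the characterisation of the unitary part as $\{h:\|T^nh\|=\|h\|=\|T^{*n}h\|\ \forall n\geq 0\}$ --- this is \cite[Thm.~I.3.2]{SFBK10} and should be cited or reproved. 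Second, in the resolvent reformulation you must also recover $\mathcal{M}$ itself (the $n=0$ term) from the closed span of $(A-\lambda)^{-1}\mathcal{M}$ over $\Im\lambda<0$; this follows from $it(A+it)^{-1}m\to m$ as $t\to+\infty$, which again uses dissipativity, and is exactly the kind of issue you flag at the end. Third, when $V$ is unbounded the assertion $\cH_{sa}(A)\subseteq\ker V$ does not typecheck, since elements of $\cH_{sa}(A)$ need not lie in $\cD(V)$; the statement you actually need, and can extract from $\langle u,Vu\rangle=0$ for $u\in\cD(A)\cap\cH_{sa}(A)$ together with the symmetry of $V$ and Lemma \ref{lemma:spacedense}, is $\Ran(V)\perp\cH_{sa}(A)$. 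With these repairs the argument is complete and is the expected route to the cited result.
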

	
	For a more explicit description of the completely non-selfadjoint part in the general case which avoids determining the operators $\Delta$ and $\Delta_*$ by making use of the Lagrange identity, see \cite[Theorem 7.6]{BMNW20} and \cite[Theorem 3.3]{BMNW21}.

	We will also need the following characterisation of symmetric operators which are completely non-selfadjoint. 
	
	\begin{prop}[Kre{\u\i}n, \cite{Krein}]\label{prop:Krein} A closed symmetric operator $S$ on a Hilbert space $\cH$ is completely non-selfadjoint if and only if
		\begin{equation} \clos\left(\mathrm{Span}_{\lambda\in\C\setminus\R} \ker(S^*-\lambda) \right)=\mathcal{H}\:.
		\end{equation}
	\end{prop}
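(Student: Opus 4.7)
The plan is to prove the equivalence by handling each direction separately; the hard direction is to construct, from the assumption that the closed span $\mathcal{M} := \overline{\mathrm{span}_{\lambda \in \C \setminus \R} \ker(S^*-\lambda)}$ is a proper subspace, a selfadjoint reducing piece for $S$ on $\mathcal{M}^\perp$, contradicting complete non-selfadjointness.

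For ($\Leftarrow$), suppose $\mathcal{H}_1$ reduces $S$ with $S|_{\mathcal{H}_1}$ selfadjoint. For any $\lambda \in \C \setminus \R$, $\bar\lambda$ lies in $\rho(S|_{\mathcal{H}_1})$, so every $u \in \mathcal{H}_1$ is of the form $u = (S - \bar\lambda) w$ with $w \in \mathcal{D}(S) \cap \mathcal{H}_1$, and for any $v \in \ker(S^* - \lambda)$,
\[
\langle v, u\rangle = \langle v, (S-\bar\lambda) w\rangle = \langle (S^* - \lambda) v, w\rangle = 0.
\]
Thus $\mathcal{H}_1 \perp \ker(S^*-\lambda)$ for every $\lambda \notin \R$, and the density hypothesis forces $\mathcal{H}_1 = \{0\}$.

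For ($\Rightarrow$), put $K := \mathcal{M}^\perp$; the strategy is to produce a selfadjoint operator $B$ on $K$ with $B \subseteq S$ and then to show that $K$ reduces $S$ with $S|_K = B$, after which cns forces $K = \{0\}$. Since $\ker(S^* \mp i) \subseteq \mathcal{M}$, we have $K \subseteq \Ran(S \pm i)$, so the Cayley transform $V := (S-i)(S+i)^{-1}$ and its inverse are both defined on $K$. The first main step is the invariance $V K \subseteq K$ and $V^{-1} K \subseteq K$: for $u \in K$ and $w \in \ker(S^*-\lambda)$, writing $v := (S+i)^{-1} u$ and using $S^* w = \lambda w$ as in ($\Leftarrow$) gives
\[
(\bar\lambda + i)\langle w, v\rangle = \langle w, (S+i) v\rangle = \langle w, u\rangle = 0,
\]
which handles every $\lambda \neq i$; the case $\lambda = i$ is automatic since $\Ran V \subseteq \ker(S^*-i)^\perp$. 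Hence $V|_K$ is unitary on $K$, and since $\ker(I - V|_K) = \{0\}$, the inverse Cayley transform gives a densely defined selfadjoint $B$ on $K$ with $(B\pm i)^{-1} = (S\pm i)^{-1}|_K$ and $Bu = Su$ for $u \in \mathcal{D}(B) \subseteq \mathcal{D}(S) \cap K$.

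The remaining, and most technical, step is to upgrade the inclusion $B \subseteq S$ to the full reducing decomposition $\mathcal{D}(S) = \mathcal{D}(B) \oplus (\mathcal{D}(S) \cap \mathcal{M})$. Given $u \in \mathcal{D}(S)$, decompose $(S+i) u = w_K + w_{\mathcal{M}}$ orthogonally (both summands lie in $\Ran(S+i)$ since $K \subseteq \Ran(S+i)$) and set $u_1 := (B+i)^{-1} w_K \in \mathcal{D}(B)$, $u_2 := u - u_1 \in \mathcal{D}(S)$, so that $(S+i) u_2 = w_{\mathcal{M}} \in \mathcal{M}$. Writing $f := P_K u_2$ and testing against $y \in K$ via $y' := (B+i)^{-1} y \in \mathcal{D}(S)$, the symmetry identity $\langle u_2, (S+i) y'\rangle = \langle (S-i) u_2, y'\rangle$ reduces, after using $w_{\mathcal{M}} \perp y'$, to
\[
\langle f, y\rangle = 2i \langle f, (B+i)^{-1} y\rangle,
\]
which is exactly the statement $V^*|_K f = 0$; unitarity of $V|_K$ then forces $f = 0$ and hence $u_2 \in \mathcal{M}$. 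This last step---using the unitarity of $V|_K$ a second time to eliminate the $K$-component of the $\mathcal{M}$-part---is the main obstacle, because the Cayley invariance of $K$ by itself only provides a selfadjoint restriction of $S$ on $K$ and not a genuine reducing decomposition.
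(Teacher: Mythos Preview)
The paper does not prove this proposition; it is simply cited from Kre\u{\i}n's original work, so there is no paper proof to compare against. Your overall Cayley-transform strategy is the classical one and the bulk of the argument (the $(\Leftarrow)$ direction, the construction of the selfadjoint $B$ on $K$, and the reducing decomposition via the identity $\langle f,y\rangle = \langle f,(I-V|_K)y\rangle$) is correct.

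There is, however, a genuine error in the invariance step. You claim that the case $\lambda=i$ is ``automatic since $\Ran V\subseteq\ker(S^*-i)^\perp$''. This is false: $\Ran V=\Ran(S-i)=\ker(S^*+i)^\perp$, not $\ker(S^*-i)^\perp$. So $\Ran V$ only gives orthogonality to the defect space at $\lambda=-i$, which your computation $(\bar\lambda+i)\langle w,v\rangle=0$ already covers. The exceptional value $\lambda=i$ is precisely the one your displayed identity cannot reach (since $\bar\lambda+i=0$ there), and nothing you have written forces $v=(S+i)^{-1}u$ to be orthogonal to $\ker(S^*-i)$. The symmetric issue arises for $V^{-1}$ at $\lambda=-i$.

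The gap is repairable, but it needs an additional argument: one shows that removing a single point does not shrink the closed span, i.e.\ $\ker(S^*-i)\subseteq\clos\bigl(\Span_{\lambda\in\C\setminus(\R\cup\{i\})}\ker(S^*-\lambda)\bigr)$. Equivalently, if $f\perp\ker(S^*-\lambda)$ for all $\lambda\in\C^+\setminus\{i\}$, then $f\in\Ran(S-\mu)$ for every $\mu\in\C^-\setminus\{-i\}$; writing $f=(S-\mu)g_\mu$ and using $\|(S-\mu)^{-1}\|\le|\Im\mu|^{-1}$ together with the resolvent-type identity $g_\mu-g_\nu=(\mu-\nu)(S-\mu)^{-1}g_\nu$, one sees that $g_\mu$ is Cauchy as $\mu\to-i$, and closedness of $S$ yields $f=(S+i)g\in\Ran(S+i)=\ker(S^*-i)^\perp$. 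With this lemma in hand, your computation for $\lambda\neq i$ already gives $v\in K$, and the rest of your proof goes through.
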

	
	Let us also introduce the symmetric subspace of an MDO:
	\begin{defn}
		Let $A$ be an MDO. Then, the \emph{symmetric subspace} $\cH_{sym}(A)$ of $A$ is given by
		\begin{equation} \label{eq:kuzhel}
			\cH_{sym}(A)=\{f\in\cD(A): \forall g\in\cD(A): \langle f,Ag\rangle=\langle Af,g\rangle\}\:.
		\end{equation}	
		
	\end{defn} 
	\begin{remark}\label{rem:kuzhel}
		Clearly, we have $\cH_{sa}(A)\cap\cD(A)\subset \cH_{sym}(A)$. Moreover, in \cite[Prop. 1.1]{Kuz96}, it was shown that 
		\begin{equation}
			\cH_{sym}(A)=\ker(A-A^*)=\{f\in\cD(A)\cap\cD(A^*): Af=A^*f\}\:.
		\end{equation}
		Moreover, extending the definition of the symmetric subspace to maximally anti-dissipative operators in the obvious way, we have that $\cH_{sym}(A)=\cH_{sym}(A^*)$.
	\end{remark}

	\section{The half-line Schr\"odinger operator}
	
	Let $\cH=L^2(\R^+)$ and let $H^2(\R^+)$ denote the usual Sobolev space of order $2$ over $\R^+$. We consider a Schr\"odinger operator on the half-line with dissipative potential. To this end, let the minimal operator $A_{min}$ be given by
	\begin{equation} \label{eq:minimal}
		A_{min}:\quad \cD(A_{min})=\{f\in H^2(\R^+): f(0)=f'(0)=0\},\quad (A_{min}f)(x)=-f''(x)+V(x)f(x)\:,
	\end{equation}
	where $V\in L^\infty(\R^+)$ such that $V_I(x):=\Im V(x)\geq 0$ almost everywhere. 
	For later purposes, it will also be convenient to write $A_{min}=S+iV_I$, where the symmetric operator $S$ is given by
	\begin{equation}
		\cD(S)=\{f\in H^2(\R^+): f(0)=f'(0)=0\},\quad (Sf)(x)=-f''(x)+V_R(x)f(x)\:,
	\end{equation}
	where $V_R(x):=\Re V(x)$ and $V_I$ is the multiplication operator by $V_I(x)$.  Moreover, note that  $S^*$ is given by
	\begin{equation}
		\cD(S^*)=H^2(\R^+),\qquad (S^*f)(x)=-f''(x)+V_R(x)f(x)\:.
	\end{equation}
	Let us now show a useful lemma about the elements of the defect spaces $\ker(S^*-\overline{\lambda})$, where $\lambda\in\mathbb{C}\setminus\mathbb{R}$.
	\begin{lemma} \label{lemma:kerspan}
		Let $\lambda\in\mathbb{C}\setminus\mathbb{R}$. Then, the initial value problem
		\begin{align}
			S^*f=-f''+V_Rf=\overline{\lambda}f, \quad f(0)=1
		\end{align}	
		has a unique solution, which we denote by $\phi_\lambda$.
		Moreover, we have
		\begin{equation}
			\lim_{\mu\rightarrow\lambda}\|\phi_\lambda-\phi_\mu\|=0\:.
		\end{equation}
	\end{lemma}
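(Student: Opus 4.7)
The plan rests on two standard facts about the formally symmetric differential expression $-d^2/dx^2 + V_R$ on $\R^+$: since $V_R\in L^\infty(\R^+)$, we are in the limit-point case at $\infty$, so $\dim\ker(S^*-\bar\lambda) = 1$ for every $\lambda\in\C\setminus\R$; and the Dirichlet realisation $S_D$ with $\cD(S_D)=\{f\in H^2(\R^+): f(0)=0\}$ is selfadjoint (real bounded potential on a half-line), so $\bar\lambda\in\rho(S_D)$ with $\|(S_D-\bar\lambda)^{-1}\|\leq 1/|\Im\lambda|$. I will use both throughout.

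For existence and uniqueness of $\phi_\lambda$, I first pick any nonzero $\psi\in\ker(S^*-\bar\lambda)$, which exists by the limit-point remark. The key observation is that $\psi(0)\neq 0$: otherwise $\psi\in H^2(\R^+)$ with $\psi(0)=0$, so $\psi\in\cD(S_D)$ would be a nonzero eigenfunction of the selfadjoint operator $S_D$ with non-real eigenvalue $\bar\lambda$, a contradiction. Hence $\phi_\lambda:=\psi/\psi(0)$ is a bona fide solution with $\phi_\lambda(0)=1$. Uniqueness is then immediate: if $\widetilde\phi$ also solves the IVP, the difference $\phi_\lambda-\widetilde\phi\in\ker(S^*-\bar\lambda)$ vanishes at $0$, hence by the same argument must be zero.

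For the continuity claim I would use a short resolvent trick. Fix $\lambda\in\C\setminus\R$ and let $\mu$ vary in a small neighbourhood so that $|\Im\mu|\geq|\Im\lambda|/2$. Since $\phi_\lambda$ and $\phi_\mu$ both lie in $H^2(\R^+)$ (their second derivatives equal $(V_R-\bar\lambda)\phi_\lambda$ and $(V_R-\bar\mu)\phi_\mu$, which are in $L^2$), and since $(\phi_\mu-\phi_\lambda)(0)=0$, the difference belongs to $\cD(S_D)$. A direct computation using the two eigenvalue equations gives
\begin{equation*}
(S_D-\bar\mu)(\phi_\mu-\phi_\lambda) \;=\; (\bar\mu-\bar\lambda)\,\phi_\lambda,
\end{equation*}
so that
\begin{equation*}
\phi_\mu-\phi_\lambda \;=\; (\bar\mu-\bar\lambda)\,(S_D-\bar\mu)^{-1}\phi_\lambda,
\qquad
\|\phi_\mu-\phi_\lambda\|\;\leq\;\frac{2|\mu-\lambda|}{|\Im\lambda|}\,\|\phi_\lambda\|,
\end{equation*}
which tends to $0$ as $\mu\to\lambda$.

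The main obstacle, such as it is, lies in the non-vanishing at $0$ of the unique $L^2$-solution: this is where the selfadjointness of the Dirichlet realisation is essential. Everything else—the limit-point property from boundedness of $V_R$, the $H^2$-regularity of $\phi_\lambda$ from the eigenvalue equation, and the standard resolvent bound—is routine. I do not expect any further technical hurdle, and in particular the continuity step avoids any appeal to Weyl $m$-function analyticity.
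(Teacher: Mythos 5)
Your proposal is correct and follows essentially the same route as the paper: both obtain uniqueness and the non-vanishing of $\phi_\lambda(0)$ from the selfadjointness of the Dirichlet realisation, and both prove the continuity via the identity $\phi_\mu-\phi_\lambda=(\overline{\mu}-\overline{\lambda})(\hat S-\overline{\mu})^{-1}\phi_\lambda$ together with the resolvent bound $\|(\hat S-\overline{\mu})^{-1}\|\leq 1/|\Im\mu|$. The only cosmetic difference is that you derive this identity by applying $\hat S-\overline{\mu}$ to the difference, whereas the paper verifies directly that the right-hand side lies in $\ker(S^*-\overline{\mu})$ and has the correct value at $0$.
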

	\begin{proof}
		Firstly note that $\dim\ker(S^*-\overline{\lambda})=1$ since $S$ is limit-circle at $0$ and limit-point at infinity. Now, note that if $f\in\ker(S^*-\overline{\lambda})$, it is not possible that $f(0)=0$. This follows from the fact that if $f(0)=0$ and $f\in\ker(S^*-\overline{\lambda})$, then $f$ would be in the domain of the selfadjoint operator 
		\begin{equation}
			\hat{S}:\quad\mathcal{D}(\hat{S})=\{f\in H^2(\mathbb{R}^+): f(0)=0\}, \quad f\mapsto-f''+V_Rf\:.
		\end{equation}
		Since in this case, we would have $\hat{S}f=S^*f=\overline{\lambda}f$, this would mean that $f$ is an eigenvector of a selfadjoint operator corresponding to a non-real eigenvalue, which is impossible. Thus, there exists a unique element $\phi_\lambda$ of $\ker(S^*-\overline{\lambda})$ with $\phi_\lambda(0)=1$.
		
		Now, observe that for any $\mu\in\C\setminus\R$, we have
		\begin{equation}
			\phi_\mu=\left[\idty-(\overline{\lambda}-\overline{\mu})(\hat{S}-\overline{\mu})^{-1}\right]\phi_\lambda\:.
		\end{equation}
		To see this, note that it follows from a direct calculation that $\left[\idty-(\overline{\lambda}-\overline{\mu})(\hat{S}-\overline{\mu})^{-1}\right]\phi_\lambda\in\ker(S^*-\overline{\mu})$. Moreover, since $\left[(\hat{S}-\overline{\mu})^{-1}\phi_\lambda\right](0)=0$, the boundary condition at $0$ is satisfied. We therefore get
		\begin{equation}
			\|\phi_\lambda-\phi_\mu\|=|\lambda-\mu|\|(\hat{S}-\overline{\mu})^{-1}\phi_\lambda\|\leq |\lambda-\mu|\frac{1}{|\Imag{\mu}|}\overset{\mu\rightarrow\lambda}{\longrightarrow}0\:,
		\end{equation}
		which finishes the proof.
	\end{proof}
	\begin{remark}
		Note that the proof of the lemma shows that $\C\setminus\R$ is contained in the residual spectrum of the operator $S$: Clearly, for such $\lambda$ the operator is $S-\lambda$ is injective, else $\lambda$ would be a non-real eigenvalue of the symmetric operator $S$; however, the range of $S-\lambda$ is not dense, as its orthogonal complement contains $\phi_\lambda$.
	\end{remark}

	\begin{lemma} \label{lemma:simple}
		The operator $S$ is completely non-selfadjoint.
	\end{lemma}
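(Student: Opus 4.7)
By Kre\u{\i}n's criterion (Proposition \ref{prop:Krein}), $S$ is completely non-selfadjoint if and only if the closed linear span of the defect spaces $\{\ker(S^*-\overline{\lambda}) : \lambda\in\C\setminus\R\}$ equals $L^2(\R^+)$. By Lemma \ref{lemma:kerspan} each defect space is one-dimensional and spanned by the Weyl solution $\phi_\lambda$, so the task reduces to proving density of $\spann\{\phi_\lambda : \lambda \in \C\setminus\R\}$ in $L^2(\R^+)$.

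Fix $\lambda_0 \in \C\setminus\R$. The resolvent representation established in the proof of Lemma \ref{lemma:kerspan},
\begin{equation*}
\phi_\mu = \phi_{\lambda_0} - (\overline{\lambda_0}-\overline{\mu})(\hat S - \overline{\mu})^{-1}\phi_{\lambda_0},
\end{equation*}
shows that $\spann\{\phi_\mu : \mu\in\C\setminus\R\}$ coincides with $\spann\{\phi_{\lambda_0}\} + \spann\{(\hat S - z)^{-1}\phi_{\lambda_0} : z\in\C\setminus\R\}$, whose closure is the cyclic subspace of the selfadjoint Dirichlet operator $\hat S$ generated by $\phi_{\lambda_0}$. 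Consequently it suffices to establish that $\phi_{\lambda_0}$ is a cyclic vector for $\hat S$.

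To do this, assume $g\in L^2(\R^+)$ is orthogonal to the cyclic subspace. Then $z\mapsto \langle g,(\hat S-z)^{-1}\phi_{\lambda_0}\rangle$ vanishes on $\C\setminus\R$; being the Cauchy transform of the complex spectral measure $d\nu(t):=d\langle g,E_{\hat S}(t)\phi_{\lambda_0}\rangle$, Stieltjes inversion forces $\nu\equiv 0$. Passing to the canonical spectral representation $U:L^2(\R^+)\to L^2(\R, d\rho)$ of $\hat S$ implemented by the Dirichlet solutions $\theta_t$ with $\theta_t(0)=0$, $\theta_t'(0)=1$ (the operator $\hat S$ is a regular half-line Schr\"odinger operator and therefore has simple spectrum), the identity $\nu\equiv 0$ reads $(Ug)(t)\overline{(U\phi_{\lambda_0})(t)}=0$ for $\rho$-a.e.\ $t$. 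A direct Green-identity computation,
\begin{equation*}
(t-\overline{\lambda_0})\int_0^N\theta_t(x)\phi_{\lambda_0}(x)\,dx = \left[\theta_t(x)\phi_{\lambda_0}'(x)-\theta_t'(x)\phi_{\lambda_0}(x)\right]_0^N,
\end{equation*}
combined with the exponential decay at infinity of the Weyl $L^2$-solution $\phi_{\lambda_0}$ (which holds since $\overline{\lambda_0}\notin\R$), gives $(U\phi_{\lambda_0})(t)=(t-\overline{\lambda_0})^{-1}$, a function that is nowhere zero on $\R$. Hence $Ug\equiv 0$, so $g=0$, as required.

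I expect the main obstacle to lie in the cyclicity step of the last paragraph: it requires invoking both the simplicity of the spectrum of the half-line Dirichlet Schr\"odinger operator and the non-vanishing of the spectral image of the Weyl solution $\phi_{\lambda_0}$. Both are classical facts in Sturm-Liouville theory but rely on the generalized eigenfunction expansion machinery, which is not developed in the paper; the authors may prefer either to argue cyclicity by a more self-contained resolvent/Volterra-type manipulation or to cite the relevant Sturm-Liouville references.
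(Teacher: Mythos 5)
Your reduction is exactly the paper's: by Proposition \ref{prop:Krein} and Lemma \ref{lemma:kerspan}, complete non-selfadjointness of $S$ is equivalent to the density statement \eqref{eq:cns}, $\clos\left(\Span\{\phi_\lambda:\lambda\in\C\setminus\R\}\right)=L^2(\R^+)$. The difference is that the paper stops there and quotes \cite[Thm.~6.2]{GNWZ18} for \eqref{eq:cns}, whereas you supply a proof of that density. Your argument is sound in outline and genuinely self-contained up to classical Sturm--Liouville input: the resolvent formula already appearing in the proof of Lemma \ref{lemma:kerspan} does identify $\clos\left(\Span\{\phi_\mu:\mu\in\C\setminus\R\}\right)$ with the cyclic subspace of the Dirichlet realisation $\hat{S}$ generated by $\phi_{\lambda_0}$ (note that $-z(\hat{S}-z)^{-1}\phi_{\lambda_0}\to\phi_{\lambda_0}$ as $\Im z\to\infty$, so the extra summand $\spann\{\phi_{\lambda_0}\}$ is automatically contained in that cyclic subspace), and the Stieltjes-inversion step is standard. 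The one load-bearing ingredient is the cyclicity of $\phi_{\lambda_0}$ via $(U\phi_{\lambda_0})(t)=(t-\overline{\lambda_0})^{-1}$: for the boundary term at $N$ in your Green identity to vanish you need both the exponential decay of $\phi_{\lambda_0}$ and $\phi_{\lambda_0}'$ (true for bounded $V_R$ and non-real $\overline{\lambda_0}$, e.g.\ by a Combes--Thomas-type estimate) and subexponential bounds on $\theta_t$, $\theta_t'$ for $\rho$-a.e.\ $t$ (a Sch'nol/Berezanskii-type estimate), together with the simplicity of the spectrum of $\hat{S}$. You flag these dependencies yourself, and they are exactly why the authors prefer to cite \cite{GNWZ18}, whose Theorem 6.2 is this density result (in a more general, operator-valued setting). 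In short: same skeleton, but you prove the cited lemma rather than citing it; the citation buys brevity and generality, your route buys transparency at the price of importing the eigenfunction-expansion machinery.
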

	\begin{proof}
		
		This follows from \cite[Thm.\ 6.2]{GNWZ18}, where it was shown that 
		\begin{equation}\label{eq:cns}
			\clos\left(\Span\{\phi_\lambda:\lambda\in\C\setminus\R\}\right)=L^2(\R^+)\:.
		\end{equation}
		By Proposition \ref{prop:Krein}, this implies that $S$ is completely non-selfadjoint.
	\end{proof}
	\begin{remark}
		In the literature, a completely non-selfadjoint symmetric operator is commonly referred to as ``simple".
	\end{remark}
	In \cite{Nonproper}, it was shown that all maximally dissipative extensions of $A_{min}$ can be parametrised by $h\in\C^+\cup\R\cup\{\infty\}$ and $k\in\Ran(V_I^{1/2})$ satisfying 
	\begin{equation} \label{eq:disscond}
		\frac{1}{4}\int_\mathcal{E}\frac{|k(x)|^2}{V_I(x)}dx\leq \Im(h)\:  \mbox{ \quad or equivalently\quad }  \frac{1}{4}\|V_I^{-1/2}k\|^2\leq \Imag(h),
	\end{equation}
	where $\mathcal{E}=\{x\in\R^+: V_I(x)\neq 0\}$, which is determined up to a set of Lebesgue measure zero. The extensions are given by
	\begin{align} \label{eq:mdoext}
		A_{h,k}:\qquad \cD(A_{h,k})&=\{f\in L^2(\R^+): f'(0)=hf(0)\}\notag\\
		(A_{h,k}f)(x)&=-f''(x)+V(x)f(x)+f(0)k(x)\:.
	\end{align}
	Note that this means the differential expression $``-\frac{d^2}{dx^2}+V(x)"$ is preserved if and only if $k\equiv 0$.
	Moreover, from a direct calculation it can be seen that the adjoint operator $A_{h,k}^*$ is given by
	\begin{align} \label{eq:mdoadj}
		A_{h,k}^*:\qquad \cD(A_{h,k}^*)&=\{f\in L^2(\R^+): f'(0)=\overline{h}f(0)+\langle k,f\rangle\}\notag\\
		(A_{h,k}^*f)(x)&=-f''(x)+\overline{V(x)}f(x)\:.
	\end{align}
	
	The special case $h=\infty$ corresponds to a Dirichlet boundary condition and Condition \eqref{eq:disscond} simplifies to $k = 0$ a.e.~in this case. In other words, the only maximally dissipative extension of $A_{min}$ which has a Dirichlet boundary condition at $0$ is given by
	\begin{equation} \label{eq:Dirichlet}
		A_{\infty,0}:\quad\cD(A_{\infty,0})=\{f\in H^2(\R^+): f(0)=0\}, \qquad (A_{\infty,0}f)(x)=-f''(x)+V(x)f(x)\:.
	\end{equation}

	Next, let us introduce the subspace 
	\begin{equation*}
		\cD_0:=\cD(A_{min})\cap\ker(V_I)\:.
	\end{equation*}
	Note that for any $f\in\cD_0$ we have $A_{h,k}f=Sf$. Thus, for any $f\in\cD_0$ and $g\in\cD(A_{h,k})$, we get
	\begin{align}
		\langle f,A_{h,k}g\rangle&=\langle f,-g''+V_Rg+iV_Ig+g(0)k\rangle=\langle f,-g''+V_Rg\rangle=\langle -f''+V_Rf,g\rangle\notag
		\\&=\langle Sf,g\rangle=\langle A_{h,k}f,g\rangle\:,
	\end{align}
	where we used $f(0)=f'(0)=0$ as well as $f\in\ker(V_I)$, which implies, since $k\in\Ran(V_I^{1/2})$, that we have $f\perp k$. Hence, it is always true that $\cD_0\subset\cH_{sym}(A_{h,k})$. If we assume in addition that $\cH_{sym}(A_{h,k})\subset\cD_0$ -- or equivalently that $\cH_{sym}(A_{h,k})=\cD_0$ -- let us show that $A_{h,k}$ is completely non-selfadjoint:
	
	\begin{lemma} \label{lemma:D0}
		If $\mathcal{H}_{sym}(A_{h,k})=\mathcal{D}_0$, then $A_{h,k}$ is completely non-selfadjoint. 
	\end{lemma}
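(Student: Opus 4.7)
The plan is to argue by contradiction. Suppose $\mathcal{M}\subseteq L^2(\R^+)$ is a non-trivial closed reducing subspace of $A_{h,k}$ on which $A_{h,k}$ acts as a selfadjoint operator. I will show $\mathcal{M}=\{0\}$ by establishing that each function $\phi_\lambda$ from Lemma~\ref{lemma:kerspan} lies in $\mathcal{M}^\perp$. Since \eqref{eq:cns} (used in the proof of Lemma~\ref{lemma:simple}) gives $\clos\Span\{\phi_\lambda:\lambda\in\C\setminus\R\}=L^2(\R^+)$, this will force $\mathcal{M}^\perp=L^2(\R^+)$ and hence $\mathcal{M}=\{0\}$.

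First I would show $\mathcal{D}(A_{h,k})\cap\mathcal{M}\subseteq\mathcal{H}_{sym}(A_{h,k})$, which by hypothesis equals $\mathcal{D}_0$. Given $f\in\mathcal{D}(A_{h,k})\cap\mathcal{M}$ and an arbitrary $g\in\mathcal{D}(A_{h,k})$, decompose $g=g_1+g_2$ with $g_1\in\mathcal{D}(A_{h,k})\cap\mathcal{M}$ and $g_2\in\mathcal{D}(A_{h,k})\cap\mathcal{M}^\perp$ using the reducing property. Since $A_{h,k}g_2\in\mathcal{M}^\perp$ and $A_{h,k}f\in\mathcal{M}$, the cross terms $\langle f,A_{h,k}g_2\rangle$ and $\langle A_{h,k}f,g_2\rangle$ vanish, while selfadjointness of $A_{h,k}|_{\mathcal{M}}$ handles the $g_1$ piece, producing $\langle f,A_{h,k}g\rangle=\langle A_{h,k}f,g\rangle$. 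Together with Lemma~\ref{lemma:spacedense} this simultaneously gives the inclusion $\mathcal{D}(A_{h,k})\cap\mathcal{M}\subseteq\mathcal{D}_0$ and the density of $\mathcal{D}(A_{h,k})\cap\mathcal{M}$ in $\mathcal{M}$.

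The central computation is then as follows. For any $\lambda\in\C\setminus\R$ and $f\in\mathcal{D}(A_{h,k})\cap\mathcal{M}\subseteq\mathcal{D}_0$, the identity $A_{h,k}f=Sf$ holds, because the perturbation $f(0)k$ drops out ($f(0)=0$) and $iV_If=0$ since $f\in\ker V_I$. Because $\phi_\lambda\in H^2(\R^+)$ with $\phi_\lambda(0)=1$ while $f(0)=f'(0)=0$, Green's formula gives $\langle\phi_\lambda,Sf\rangle=\langle S^*\phi_\lambda,f\rangle=\lambda\langle\phi_\lambda,f\rangle$, where the boundary terms at $0$ vanish by the conditions on $f$ and those at infinity by the $H^2$-decay of both factors. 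Consequently $\langle\phi_\lambda,(A_{h,k}-\lambda)f\rangle=0$. Since $A_{h,k}|_{\mathcal{M}}$ is selfadjoint on $\mathcal{M}$, $\lambda$ lies in its resolvent set, so $(A_{h,k}-\lambda)$ maps $\mathcal{D}(A_{h,k})\cap\mathcal{M}$ onto $\mathcal{M}$; hence $\phi_\lambda\perp\mathcal{M}$.

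The main obstacle I anticipate is the first step: carefully tracking how the reducing decomposition interacts with the selfadjointness of $A_{h,k}|_{\mathcal{M}}$ so that the perturbation term $f(0)k$ in $A_{h,k}$ and the non-real part $iV_I$ are both neutralised on $\mathcal{D}(A_{h,k})\cap\mathcal{M}$. Once the inclusion into $\mathcal{D}_0$ is in hand, the Green's formula computation is the standard Lagrange identity for $S$ and $S^*$, and the surjectivity of $A_{h,k}|_{\mathcal{M}}-\lambda$ is immediate from selfadjointness, so the remaining steps are essentially routine.
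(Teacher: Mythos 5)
Your proof is correct and follows essentially the same route as the paper: reduce elements of the domain intersected with the selfadjoint reducing subspace into $\mathcal{H}_{sym}=\mathcal{D}_0$, observe that $A_{h,k}$ coincides with $S$ there, pair against $\phi_\lambda\in\ker(S^*-\overline{\lambda})$, and invoke the completeness relation \eqref{eq:cns}. The only organizational difference is that you obtain orthogonality for all $\lambda\in\C\setminus\R$ at once via surjectivity of $(A_{h,k}|_{\mathcal{M}}-\lambda)$ from selfadjointness of the restriction, whereas the paper handles $\lambda\in\C^-$ with the resolvent of $A_{h,k}$ and $\lambda\in\C^+$ with that of $A_{h,k}^*$; both are valid.
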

	\begin{proof}
		Let $\eta\in\mathcal{H}_{sa}(A_{h,k})$. For any $\lambda\in\C\setminus\R$, let $\phi_\lambda$ be such that  $\ker(S^*-\overline{\lambda})=\Span\{\phi_\lambda\}$. For any $\lambda\in\C^-$, we get
		\begin{align} \label{eq:ortho}
			\langle\phi_\lambda,\eta\rangle=\langle\phi_\lambda,(A_{h,k}-\lambda)(A_{h,k}-\lambda)^{-1}\eta\rangle=\langle\phi_\lambda,(S-\lambda)(A_{h,k}-\lambda)^{-1}\eta\rangle=0\:,
		\end{align}  	
		where we used that by assumption $(A_{h,k}-\lambda)^{-1}\eta\in\mathcal{D}(A_{h,k})\cap\mathcal{H}_{sa}(A_{h,k})\subset\mathcal{H}_{sym}(A_{h,k})=\mathcal{D}_0$. By a similar argument, replacing $A_{h,k}$ by $A_{h,k}^*$, we find that $\langle \phi_\lambda,\eta\rangle=0$ for all $\lambda\in\C^+$. By \eqref{eq:cns}, this implies that $\eta=0$ and thus $A_{h,k}$ is completely non-selfadjoint by Proposition \ref{prop:Krein}.
	\end{proof}
	
	\subsection{The non-critical case} We refer to the case when we have strict inequality in \eqref{eq:disscond} as the non-critical case.
	We are ready to prove our first main result:
	\begin{thm} \label{thm:noncrit}
		If $h\in\C^+$ and $k\in\Ran(V_I^{1/2})$ are such that we have strict inequality in \eqref{eq:disscond}, then $A_{h,k}$ is completely non-selfadjoint.
	\end{thm}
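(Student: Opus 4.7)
The plan is to deduce the theorem from Lemma \ref{lemma:D0} by verifying its hypothesis $\mathcal{H}_{sym}(A_{h,k}) = \mathcal{D}_0$ under the strict inequality in \eqref{eq:disscond}. The inclusion $\mathcal{D}_0 \subseteq \mathcal{H}_{sym}(A_{h,k})$ has already been established, so the task reduces to the reverse inclusion. Using the characterisation in Remark \ref{rem:kuzhel}, I would pick an arbitrary $f \in \cD(A_{h,k}) \cap \cD(A_{h,k}^*)$ with $A_{h,k} f = A_{h,k}^* f$ and aim to conclude $f \in \mathcal{D}_0$. The strategy is to exploit the fact that, as \eqref{eq:mdoext} and \eqref{eq:mdoadj} show, $A_{h,k}$ and $A_{h,k}^*$ disagree in two places: in their differential expressions (because $V \neq \overline{V}$ and because of the rank-one term $f(0)k$) and in their boundary conditions (conjugation of $h$ plus the extra term $\langle k,f\rangle$). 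Matching the actions and the boundary data separately yields two constraints on $f$.

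Comparing the differential expressions gives the pointwise identity
\[
f(0)\,k(x) = -2i\,V_I(x)\,f(x) \quad \text{a.e.,}
\]
and comparing the boundary conditions gives $\langle k, f\rangle = 2i\,\Im(h)\,f(0)$. If $f(0) = 0$, the pointwise identity immediately forces $V_I f \equiv 0$, the boundary condition yields $f'(0) = hf(0) = 0$, and the $H^2$-regularity implicit in $\cD(A_{h,k})$ then places $f$ in $\cD(A_{min}) \cap \ker(V_I) = \mathcal{D}_0$, as desired.

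The nontrivial case, and the main obstacle, is $f(0) \neq 0$. The plan there is to solve the pointwise identity as $k = -2iV_I f/f(0)$, substitute this back into $\int_{\mathcal{E}} |k|^2/V_I$, and use the boundary relation to eliminate $\int V_I |f|^2$ in favour of $\Im(h)|f(0)|^2$. A short calculation should then collapse everything into the sharp equality
\[
\|V_I^{-1/2} k\|^2 = 4\,\Im(h),
\]
i.e.\ the borderline critical case of \eqref{eq:disscond}. The hypothesis of strict inequality excludes exactly this equality, which produces a contradiction and forces $f(0) = 0$. The hard part of the argument is thus the recognition that a nonzero boundary value would saturate \eqref{eq:disscond}: the non-critical assumption is not a technical convenience but is precisely what rules out an enlargement of $\mathcal{H}_{sym}(A_{h,k})$ beyond $\mathcal{D}_0$, making Lemma \ref{lemma:D0} applicable.
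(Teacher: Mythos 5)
Your proposal is correct, and it reaches the paper's conclusion by a slightly different route. Both arguments reduce the theorem to verifying $\cH_{sym}(A_{h,k})=\cD_0$ and then invoke Lemma \ref{lemma:D0}; the difference lies in how that verification is done. The paper works only with the quadratic form: it computes $\Im\langle f,A_{h,k}f\rangle$ for arbitrary $f\in\cD(A_{h,k})$ and completes the square to obtain
\[
\Im\langle f,A_{h,k}f\rangle=\Bigl(\Im(h)-\tfrac{1}{4}\|V_I^{-1/2}k\|^2\Bigr)|f(0)|^2+\Bigl\|V_I^{1/2}f-i\tfrac{f(0)}{2}V_I^{-1/2}k\Bigr\|^2,
\]
so that the single fact $\Im\langle f,A_{h,k}f\rangle=0$ for $f\in\cH_{sym}$ forces first $f(0)=0$ and then $V_I^{1/2}f=0$. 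You instead use the stronger characterisation $\cH_{sym}=\ker(A_{h,k}-A_{h,k}^*)$ from Remark \ref{rem:kuzhel} together with the explicit adjoint \eqref{eq:mdoadj}, extract the two exact identities $f(0)k=-2iV_If$ and $\langle k,f\rangle=2i\Im(h)f(0)$, and show that $f(0)\neq 0$ would force $\tfrac14\|V_I^{-1/2}k\|^2=\Im(h)$ (your sketched calculation does close: $\langle k,f\rangle=\tfrac{2i}{\overline{f(0)}}\|V_I^{1/2}f\|^2$ gives $\|V_I^{1/2}f\|^2=\Im(h)|f(0)|^2$, whence $\|V_I^{-1/2}k\|^2=4\Im(h)$). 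What the paper's version buys is economy — it needs only the weaker consequence $\Im\langle f,A_{h,k}f\rangle=0$ and never touches $\cD(A_{h,k}^*)$ — and the completed square exhibits the deficit $\Im(h)-\tfrac14\|V_I^{-1/2}k\|^2$ directly. What yours buys is that you have in effect rederived the characterisation $V_If=\tfrac{i}{2}f(0)k$ of Lemma \ref{lemma:hsymmg}, which is exactly the object the paper needs later for the critical case, so your computation makes transparent why the critical case $\Im(h)=\tfrac14\|V_I^{-1/2}k\|^2$ is precisely the borderline at which a symmetric vector with $f(0)\neq 0$ can appear.
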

	\begin{proof}  Firstly, for any $f\in\mathcal{D}(A_{h,k})$, consider
		\begin{align*}
			\Im\langle f,A_{h,k}f\rangle &=\Im\langle f,-f''\rangle +\|V_I^{1/2}f\|^2+\Im\langle f,f(0)k\rangle\\
			&=\Im(h)|f(0)|^2+\|V_I^{1/2}f\|^2+\Im\langle V_I^{1/2}f,f(0)V_I^{-1/2}k\rangle\\
			&=\left(\Im(h)-\frac{1}{4}\|V_I^{-1/2}k\|^2\right)|f(0)|^2+\left\|V_I^{1/2}f- i\frac{f(0)}{2}V_I^{-1/2}k\right\|^2\\
			&\geq \left(\Im(h)-\frac{1}{4}\|V_I^{-1/2}k\|^2\right)|f(0)|^2\:.
		\end{align*}
		Now, if $f\in\cH_{sym}(A_{h,k})$, this implies in particular $\Im\langle f,A_{h,k}f\rangle=0$. Consequently, if $f\in \cH_{sym}(A_{h,k})$, we get
		\begin{equation*}
			0=\Imag\langle f,A_{h,k}f\rangle\geq \left(\Im(h)-\frac{1}{4}\|V_I^{-1/2}k\|^2\right)|f(0)|^2\:.
		\end{equation*}
		Since $\Im(h)>\frac{1}{4}\|V_I^{-1/2}k\|^2$ by assumption, this implies $f(0)=0$ and consequently, $f\in\cD(A_{min})$. We then get
		\begin{equation}
			0=\Im\langle f,A_{h,k}f\rangle=\|V_I^{1/2}f\|^2\:,
		\end{equation}
		and thus $f\in\ker(V_I^{1/2})=\ker(V_I)$. This shows that $\cH_{sym}(A_{h,k})=\cD_0$, which by Lemma \ref{lemma:D0} implies that $A_{h,k}$ is completely non-selfadjoint.
	\end{proof}
	\begin{remark}
		With this result, the case of a purely real potential $V$, corresponding to $V_I\equiv 0$ is completely covered. In this situation, all maximally dissipative extensions of $A_{min}$ are given by $A_{h,0}$, where $h\in\C^+\cup\R\cup\{\infty\}
		$. Now, if $\Im(h)>0$, we can apply Theorem \ref{thm:noncrit} and conclude that $A_{h,0}$ is completely non-selfadjoint. On the other hand, if $h=\infty$ or $\Im(h)=0$, the operator $A_{h,0}$ is obviously selfadjoint. Thus, in the following, we will always assume that $V_I\not\equiv 0$.   
	\end{remark}
	
	\subsection{The critical case}
	Let us now focus on the critical case $\Im(h)=\frac{1}{4}\|V_I^{-1/2}k\|^2$, which we will assume for the remainder of this paper. We will have to distinguish two cases: (i) the case of a selfadjoint boundary condition, corresponding to $\frac{1}{4}\|V_I^{-1/2}k\|^2=\Im(h)=0$ and (ii) the case of a dissipative boundary condition, corresponding to $\frac{1}{4}\|V_I^{-1/2}k\|^2=\Im(h)>0$.

	\subsubsection{Selfadjoint boundary condition}
	Since we are in the critical case $\frac{1}{4}\|V_I^{-1/2}k\|^2=\Im(h)$ but also assume a selfadjoint boundary condition $\Im(h)=0$, this implies that $k= 0$ a.e. Hence, we are considering only operators of the form
	\begin{equation} A_{h,0}:\quad\cD(A_{h,0})=\{f\in H^2(\R^+): f'(0)=hf(0)\},\qquad (A_{h,0}f)(x)=-f''(x)+V(x)f(x)\:,
	\end{equation}
	where $h\in\R\cup\{\infty\}$. Let us now show that all these operators are completely non-selfadjoint. As usual, $h=\infty$ is interpreted as the Dirichlet condition at the endpoint.

	\begin{thm} \label{thm:sabc}
		Assume $V_I\not\equiv 0$. The operators $A_{h,0}$, where $h\in\R\cup\{\infty\}$ are completely non-selfadjoint.
	\end{thm}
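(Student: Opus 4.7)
The key observation is that $A_{h,0} = L_h + iV_I$, where $L_h$ is the selfadjoint half-line Schr\"odinger operator with the real potential $V_R$ and the same (real) boundary condition at $0$ as $A_{h,0}$ (Dirichlet when $h = \infty$). Since $V_I\in L^\infty(\R^+)$ is nonnegative, and hence $L_h$-bounded with relative bound $0$, the simplified formula in Proposition \ref{prop:Langer} yields
$$\cH_{cns}(A_{h,0}) = \clos\bigl(\spann_{\mu\notin\R}\{(L_h-\mu)^{-1}\Ran(V_I)\}\bigr),$$
so Theorem \ref{thm:sabc} reduces to showing that $\cH_{sa}(A_{h,0})$ is trivial. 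For $\psi\in\cH_{sa}(A_{h,0})$, dualizing and using the selfadjointness of $V_I$ translates this into $V_I(L_h-\mu)^{-1}\psi = 0$ for every $\mu\in\C\setminus\R$, i.e.\ the function $(L_h-\mu)^{-1}\psi$ vanishes almost everywhere on $\mathcal{E} = \{V_I>0\}$.

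I would then pass to the spectral representation of $L_h$: since it is regular at $0$ and limit point at $\infty$, it has simple spectrum and is unitarily equivalent via a transform $U\colon L^2(\R^+)\to L^2(\R,d\rho)$ to multiplication by $\lambda$, implemented by the $C^1$ solutions $\varphi(\,\cdot\,,\lambda)$ of $-\varphi''+V_R\varphi = \lambda\varphi$ satisfying the boundary condition at $0$, normalized by $\varphi(0,\lambda)=1$ (respectively $\varphi'(0,\lambda)=1$ in the Dirichlet case). For every compactly supported $\phi\in L^2(\mathcal{E})$ and every $\mu\in\C\setminus\R$,
$$0 = \langle\phi,(L_h-\mu)^{-1}\psi\rangle = \int_\R\frac{\overline{\hat\phi(\lambda)}\hat\psi(\lambda)}{\lambda-\mu}\,d\rho(\lambda),$$
and uniqueness of the Cauchy--Stieltjes transform forces $\overline{\hat\phi(\lambda)}\hat\psi(\lambda)=0$ for $\rho$-a.e.\ $\lambda$. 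Running $\phi$ through a countable dense subset of compactly supported functions in $L^2(\mathcal{E})$ and pooling the null sets, I obtain a $\rho$-conull subset of $\supp(\hat\psi)$ on which $\int_{\mathcal{E}}\varphi(x,\lambda)\phi(x)\,dx = 0$ for every such $\phi$; a density argument (over compact subsets of $\mathcal{E}$) then gives $\varphi(\,\cdot\,,\lambda) = 0$ almost everywhere on $\mathcal{E}$ for $\rho$-a.e.\ $\lambda\in\supp(\hat\psi)$.

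The finishing blow is one-dimensional unique continuation: since $\varphi(\,\cdot\,,\lambda)$ is a classical $C^1$ solution of a linear second-order ODE with $L^\infty$ coefficients, vanishing on a set of positive Lebesgue measure provides, via a Lebesgue density-point argument, a point $x_0$ at which $\varphi(x_0,\lambda)=\varphi'(x_0,\lambda)=0$, and Carath\'eodory uniqueness for the initial value problem then forces $\varphi(\,\cdot\,,\lambda)\equiv 0$, in contradiction with the nonzero normalization at $x=0$. Hence $\hat\psi\equiv 0$ $\rho$-a.e., so $\psi=0$ and $A_{h,0}$ is completely non-selfadjoint. The main obstacle I anticipate is precisely this ODE unique continuation, since $\mathcal{E}$ need not contain any interval and $V_R$ is merely $L^\infty$; it is handled by the standard density-point fact that a $C^1$ function vanishing on a set of positive measure also has vanishing derivative at a.e.\ density point of its zero set, together with Cauchy--Lipschitz/Carath\'eodory uniqueness. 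A secondary technicality is the quantifier exchange from ``$\overline{\hat\phi}\hat\psi = 0$ $\rho$-a.e.\ for each fixed $\phi$'' to ``$\varphi(\,\cdot\,,\lambda) = 0$ on $\mathcal{E}$ for $\rho$-a.e.\ $\lambda$'', which rests on separability of the space of compactly supported functions in $L^2(\mathcal{E})$.
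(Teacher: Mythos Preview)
Your argument is correct and takes a genuinely different route from the paper's. The paper works directly with the differential expression: it locates the infimum $x_0$ of the essential support of $V_I$, shows via a Mean-Value-Theorem argument that any $u$ in the domain of a selfadjoint restriction $A_s\subset A_{h,0}$ must satisfy $u(x_0)=u'(x_0)=0$, then decouples at $x_0$ and uses explicit Green's-function representations on $(0,x_0)$ and $(x_0,\infty)$ separately---invoking completeness of eigenfunctions on the finite interval and \cite[Thm.~6.2]{GNWZ18} on the infinite one---to force $(A_s-\lambda)u=0$ and hence $u=0$. You instead exploit the relatively-bounded-imaginary-part clause of Proposition~\ref{prop:Langer} to reduce to $V_I(L_h-\mu)^{-1}\psi=0$ for all $\mu\notin\R$, pass to the Weyl--Titchmarsh spectral representation of the \emph{selfadjoint} real part $L_h$, and combine Stieltjes inversion with a separability/density-point argument and Carath\'eodory uniqueness for the ODE. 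Both proofs ultimately rest on the same one-dimensional unique-continuation fact (a nontrivial $C^1$ solution of $-\varphi''+V_R\varphi=\lambda\varphi$ with $V_R\in L^\infty$ cannot vanish on a set of positive measure), but the paper only needs it near the single point $x_0$, while you use it globally on $\mathcal{E}$. Your approach is more spectral-theoretic and arguably transports more readily to other models with bounded imaginary part; the paper's is more concrete, avoids the eigenfunction transform, and makes no appeal to the Langer formula beyond its statement.
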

	
	\begin{proof}
		Let $|M|$ denote the Lebesgue measure of a set $M\subset\R^+$ and define
		$$x_0:= \inf \left\{x>0: \left\vert(0,x+\eps)\cap\{x\in\R^+: V_I(x)>0\}\right\vert>0 \hbox{ for all } \eps>0\right\}.$$
		Then $x_0\geq 0$ and we have $V_I(x) = 0 $ a.e.~on $(0,x_0]$.
		
		Choose a decreasing sequence $(x_n)$ in $(x_0,\infty)$ such that $x_n\to x_0$. Then
		$$\left\vert(x_0,x_n+\eps)\cap\{x\in\R^+: V_I(x)>0\}\right\vert>0 \hbox{ for all } \eps>0.$$
		Let $A_s\subset A_{h,0}$ such that $A_s$ is selfadjoint on the subspace $\overline {\cD(A_s)}\subseteq \cH$. We need to show that $\cD(A_s)=\{0\}$. Let $u\in \cD(A_s)$, then
		$$0=\Im \llangle u, A_s u\rrangle \geq \int_{x_0}^{x_n+\eps} V_I |u|^2.$$
		Thus, for any $\eps>0$, we can choose a sequence $(\tilde{x}_n)$ with $\tilde{x}_n\in (x_0,x_n+\eps)$ such that $u(\tilde{x}_n)=0$. As $u\in H^2(\R^+)$, it is continuous, so $u(x_0)=0$. 
		
		Moreover, $u'$ is continuous, so both its real and imaginary parts are. By the Mean Value Theorem there exist  a sequence $(x_n')$  with $\tilde{x}_{n+1}\leq x_n'\leq \tilde{x}_n$ such that $\Re(u')(x_n')=0$ and another sequence $(x_n'')$  with $\tilde{x}_{n+1}\leq x_n''\leq \tilde{x}_n$ such that $\Im(u')(x_n'')=0$. By continuity, $\Re(u')(x_0)=0=\Im(u')(x_0)$, so $u'(x_0)=0$. 
		
		First assume $x_0>0$ and consider the operator $A_{x_0}:=A_{(0,x_0)}\oplus A_{(x_0,\infty)}$, where both $A_{(0,x_0)}$ and $A_{(x_0,\infty)}$ are given by the expression 
		$$f\mapsto - f''+V_R f
		$$	 	
		with
		$$\cD(A_{(0,x_0)}):=\{f\in H^2(0,x_0):   f'(0)=hf(0), f(x_0)=0 \}$$
		and $$\cD(A_{(x_0,\infty)}):=\{f\in H^2(x_0,\infty): f(x_0)=0 \}.$$
		Since $u(x_0)=0$, we have that $u_0:=u\vert_{(0,x_0)} \in \cD(A_{(0,x_0)})$ and $u_\infty=u\vert_{(x_0,\infty)} \in \cD(A_{(x_0,\infty)})$.
		
		Next, let $\lambda\in\C^-$ and consider $(A_s-\lambda)u=g$ for $u\in \cD(A_s)$ and $g\in\overline{\cD(A_s)}$. Then
		$$(A_s-\lambda)u=g \Longleftrightarrow - u''+(V-\lambda) u=g\Longleftrightarrow - u''+(V_R-\lambda) u=g \Longleftrightarrow (A_{x_0}-\lambda)u=g$$
		and
		$$u=(A_{x_0}-\lambda)^{-1}g= u_0\oplus u_\infty=(A_{(0,x_0)}-\lambda)^{-1} g_0 \oplus (A_{(x_0,\infty)}-\lambda)^{-1} g_\infty,$$
		where $g_0=g\vert_{(0,x_0)}$ and $g_\infty=g\vert_{(x_0,\infty)}$.
		
		Let $G_{(0,x_0)}$ denote the Green's function associated with $A_{(0,x_0)}$ and let $\varphi_l,\varphi_r$ be the solutions to 
		$-\varphi''+V_R\varphi=\lambda\varphi$ in $(0,x_0)$ satisfying $\varphi_l'(0,\lambda)=h\varphi_l(0,\lambda)$ and $\varphi_r(x_0,\lambda)=0$, respectively. Let $W(\lambda)$ be the corresponding Wronskian. Then for $x\in(0,x_0)$ we have
		\begin{eqnarray*}
			u_0(x)&=& \int_0^{x_0}G_{(0,x_0)}(x,y)g_0(y) dy \\ & = & \int_0^x \frac{\varphi_r(x,\lambda)\varphi_l(y,\lambda)}{W(\lambda)}g_0(y)\ dy + \int_x^{x_0} \frac{\varphi_l(x,\lambda)\varphi_r(y,\lambda)}{W(\lambda)}g_0(y)\ dy.
		\end{eqnarray*}
		Moreover,
		\begin{eqnarray}
			u_0'(x)&=& \int_0^x \frac{\varphi_r'(x,\lambda)\varphi_l(y,\lambda)}{W(\lambda)}g_0(y)\ dy + \int_x^{x_0} \frac{\varphi_l'(x,\lambda)\varphi_r(y,\lambda)}{W(\lambda)}g_0(y)\ dy.
		\end{eqnarray}
		As shown above, since $u\in \cD(A_s)$,
		\begin{equation}\label{eq:uprime}
			u_0'(x_0)= \frac{\varphi_r'(x_0,\lambda)}{W(\lambda)} \int_0^{x_0} \varphi_l(y,\lambda)g_0(y)\ dy =0.
		\end{equation}
		Since $\varphi_r'(x_0,\lambda)\neq 0$, Equation \eqref{eq:uprime} implies that $\overline{g}_0\perp\varphi_l(\cdot,\lambda)$  for all $\lambda\in\C^-$. By analyticity, $\overline{g}_0\perp\varphi_l(\cdot,\lambda)$  for all $\lambda\in\C$. Choosing $\lambda$ to run through all eigenvalues of $A_{(0,x_0)}$, we get that $\overline{g}_0$ is orthogonal to all eigenvectors and root vectors. These are complete, see e.g.~\cite[Chapter V.2]{GK69} or \cite{WW12}, so therefore, $g_0(x)=0$.
		
		Next we consider $g_\infty$. Let $\tilde{\varphi}_l$  be the solution to 
		$-\varphi''+V_R\varphi=\lambda\varphi$ in $(x_0,\infty)$ satisfying $\tilde{\varphi}_l(x_0,\lambda)=0$. 
		Let $f$ be the  $L^2$-solution of $- f''+V_R f=\lambda f$
		and $\tilde W(\lambda)$ be the corresponding Wronskian. Then for $x\in(x_0,\infty)$ we have
		\begin{eqnarray}
			u_\infty(x)&=& \int_{x_0}^x \frac{f(x,\lambda)\tilde\varphi_l(y,\lambda)}{\tilde W(\lambda)}g_\infty(y)\ dy + \int_x^\infty \frac{\tilde\varphi_l(x,\lambda)f(y,\lambda)}{\tilde W(\lambda)}g_\infty(y)\ dy.
		\end{eqnarray}
		Moreover,
		\begin{eqnarray}
			u_\infty'(x)&=& \int_{x_0}^x \frac{f'(x,\lambda)\tilde\varphi_l(y,\lambda)}{\tilde W(\lambda)}g_\infty(y)\ dy + \int_x^\infty \frac{\tilde\varphi_l'(x,\lambda)f(y,\lambda)}{\tilde W(\lambda)}g_\infty(y)\ dy.
		\end{eqnarray}
		Since $u\in \cD(A_s)$,
		\begin{equation}\label{eq:uprime2}
			u_\infty'(x_0)= \frac{\tilde\varphi_l'(x_0,\lambda)}{\tilde W(\lambda)} \int_{x_0}^\infty f(y,\lambda)g_\infty(y)\ dy =0.
		\end{equation} 
		This implies that $\overline{g_\infty}\perp f(\cdot,\lambda)$  for all $\lambda\in\C\setminus\R$. By  \cite[Theorem 6.2]{GNWZ18} we get $g_\infty(x)=0$. 
		
		Therefore, $g(x)=g_0(x)+g_\infty(x)=0$ a.e.~and this implies that $u=0$ and so $\cD(A_s)$ is trivial.
		
		Finally, we note that if $x_0=0$, then $u=u_\infty$ which can be shown to be zero also in this case by the same argument as above.
	\end{proof}
	
	\subsubsection{Dissipative boundary condition}	
	
	In this section, we will only consider the case where $\frac{1}{4}\|V_I^{-1/2}k\|^2=\Im(h)>0$. We investigate when the operators $A_{h,k}$ are completely non-selfadjoint and when they possess a non-trivial reducing selfadjoint subspace. Since we are only considering the selfadjoint/completely non-selfadjoint/symmetric subspaces of the operator $A_{h,k}$, we will drop the dependence on $A_{h,k} $ and only write $\cH_{sa}, \cH_{cns}$ and $\cH_{sym}$, respectively.
	
	The next two lemmas give a more explicit description of $\cH_{sym}$.
	
	\begin{lemma} \label{lemma:hsymmg}
		We have $\cH_{sym}=\ker(A_{h,k}-A_{h,k}^*)=\{f\in\mathcal{D}(A_{h,k}): V_If=\frac{i}{2}f(0)k\}$.
	\end{lemma}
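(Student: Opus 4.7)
My plan is to use Remark \ref{rem:kuzhel} to identify $\cH_{sym}=\ker(A_{h,k}-A_{h,k}^*)$ with the set of $f\in\cD(A_{h,k})\cap\cD(A_{h,k}^*)$ satisfying $A_{h,k}f=A_{h,k}^*f$, and then show that this set coincides with the set $\mathcal{S}:=\{f\in\cD(A_{h,k}):V_If=\tfrac{i}{2}f(0)k\}$ by proving the two inclusions separately. The forward inclusion $\ker(A_{h,k}-A_{h,k}^*)\subseteq\mathcal{S}$ is pure algebra: comparing \eqref{eq:mdoext} and \eqref{eq:mdoadj}, the equation $A_{h,k}f=A_{h,k}^*f$ reduces pointwise to
\begin{equation*}
(V-\overline{V})f+f(0)k=2iV_If+f(0)k=0,
\end{equation*}
which rearranges to $V_If=\tfrac{i}{2}f(0)k$.

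The reverse inclusion $\mathcal{S}\subseteq\ker(A_{h,k}-A_{h,k}^*)$ is the content of the lemma, and the nontrivial point is that membership in $\cD(A_{h,k}^*)$ must be automatically forced by $f\in\cD(A_{h,k})$ together with the condition $V_If=\tfrac{i}{2}f(0)k$. For this I would compute $\langle k,f\rangle$ directly from the pointwise identity. Since $k\in\Ran(V_I^{1/2})$, the function $k$ vanishes a.e.\ on $\R^+\setminus\mathcal{E}$, so the integral is supported on $\mathcal{E}$, where $f(x)=\tfrac{if(0)}{2V_I(x)}k(x)$. Substituting yields
\begin{equation*}
\langle k,f\rangle=\int_{\mathcal{E}}\overline{k(x)}\,\frac{if(0)}{2V_I(x)}k(x)\,dx=\frac{if(0)}{2}\bigl\|V_I^{-1/2}k\bigr\|^{2}.
\end{equation*}
Now the critical-case hypothesis $\tfrac{1}{4}\|V_I^{-1/2}k\|^2=\Imag(h)$ converts this into $\langle k,f\rangle=2i\,\Imag(h)\,f(0)=(h-\overline{h})f(0)$. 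Combined with the boundary condition $f'(0)=hf(0)$ coming from $f\in\cD(A_{h,k})$, this gives $f'(0)=\overline{h}f(0)+\langle k,f\rangle$, so $f\in\cD(A_{h,k}^*)$. A second check shows $A_{h,k}f=A_{h,k}^*f$: the pointwise difference is $2iV_If+f(0)k$, which vanishes by the assumed identity $V_If=\tfrac{i}{2}f(0)k$.

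The only delicate step is the computation of $\langle k,f\rangle$, and in particular making rigorous that ``$f(x)=\tfrac{if(0)}{2V_I(x)}k(x)$ on $\mathcal{E}$'' is a valid substitution — this follows since $V_I$ is nonzero a.e.\ on $\mathcal{E}$ and $k$ vanishes a.e.\ off $\mathcal{E}$, so the integrand is unambiguous. Everything else is straightforward bookkeeping from the explicit formulas \eqref{eq:mdoext} and \eqref{eq:mdoadj} combined with the precise identity $\Imag(h)=\tfrac{1}{4}\|V_I^{-1/2}k\|^2$ defining the critical case.
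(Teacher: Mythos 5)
Your proposal is correct and follows essentially the same route as the paper: the first equality via Remark \ref{rem:kuzhel}, the forward inclusion by direct algebra, and the reverse inclusion by computing $\langle k,f\rangle=2i\Im(h)f(0)$ from the pointwise identity together with the critical-case relation $\Im(h)=\tfrac14\|V_I^{-1/2}k\|^2$, which forces the adjoint boundary condition. The paper writes the same computation of $\langle k,f\rangle$ in terms of $V_I^{\pm 1/2}$ rather than as an explicit integral over $\mathcal{E}$, but the argument is identical.
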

	\begin{proof}
		The first equality was established in Remark \ref{rem:kuzhel}, so we only need to show the second.
		It is obvious that $\ker(A_{h,k}-A_{h,k}^*)\subset \{f\in\mathcal{D}(A_{h,k}): V_If=\frac{i}{2}f(0)k\}$. Hence, we need to show that if $f\in\mathcal{D}(A_{h,k})$ such that $V_If=\frac{i}{2}f(0)k$, then this implies that $f\in\mathcal{D}(A_{h,k}^*)=\{f\in H^2(\R^+): f'(0)=\overline{h}f(0)+\langle k,f\rangle\}$, see also \eqref{eq:mdoadj}. But this follows from
		\begin{align}
			f'(0)&=hf(0)=\overline{h}f(0)+2i\Im(h)f(0)=\overline{h}f(0)+\left\langle V_I^{-1/2}k,V_I^{-1/2}\left(\frac{i}{2}f(0)k\right)\right\rangle\\
			&=\overline{h}f(0)+\langle k,f\rangle\:,
		\end{align}
		where we have used that $\Im(h)=\frac{1}{4}\|V_I^{-1/2}k\|^2$ and $V_If=\frac{i}{2}f(0)k$.
	\end{proof}

	\begin{lemma} \label{lemma:2cases} We have the following two cases:
		\begin{itemize} 
			\item[(i)]
			If there exists a function $K_V\in\mathcal{D}(A_{h,k})$ with $K_V(0)\neq 0$ such that $V_IK_V=\frac{i}{2}K_V(0)k$, then $\mathcal{H}_{sym}=\mathcal{D}_0\dot{+}\spann\{K_V\}$.
			\item[(ii)] If there does not exist such a function, then 
			$\mathcal{H}_{sym}=\mathcal{D}_0$\:.
		\end{itemize}
	\end{lemma}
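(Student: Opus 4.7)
My plan is to reduce everything to the characterisation $\cH_{sym}=\{f\in\cD(A_{h,k}): V_I f=\tfrac{i}{2}f(0)k\}$ supplied by Lemma \ref{lemma:hsymmg}, together with the inclusion $\cD_0\subseteq \cH_{sym}$ that was already established in the paragraph preceding Lemma \ref{lemma:D0}.

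For case (i), the hypotheses on $K_V$ together with Lemma \ref{lemma:hsymmg} place $K_V$ in $\cH_{sym}$, and hence $\cD_0+\spann\{K_V\}\subseteq \cH_{sym}$. For the reverse inclusion, given an arbitrary $f\in\cH_{sym}$, I would set
$$g:=f-\frac{f(0)}{K_V(0)}K_V$$
and verify directly that $g\in\cD_0$. By construction $g(0)=0$; the derivative condition $g'(0)=0$ follows from $f'(0)=hf(0)$ and $K_V'(0)=hK_V(0)$ by taking the appropriate linear combination; and finally $V_I g=0$ is an immediate consequence of the two identities $V_I f=\tfrac{i}{2}f(0)k$ and $V_I K_V=\tfrac{i}{2}K_V(0)k$. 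So $g\in\cD(A_{min})\cap\ker V_I=\cD_0$, giving the decomposition $f=g+\tfrac{f(0)}{K_V(0)}K_V$. The sum is direct because any element of $\spann\{K_V\}\cap\cD_0$ is a scalar multiple of $K_V$ that vanishes at $0$, and therefore must be $0$ since $K_V(0)\neq 0$.

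For case (ii), suppose for contradiction that some $f\in\cH_{sym}$ had $f(0)\neq 0$. Then $f$ itself belongs to $\cD(A_{h,k})$, has $f(0)\neq 0$, and satisfies $V_I f=\tfrac{i}{2}f(0)k$, so $f$ would serve as a function $K_V$ of the type explicitly excluded by the hypothesis of (ii). Therefore every $f\in\cH_{sym}$ must satisfy $f(0)=0$, which immediately forces $V_I f=0$ and $f'(0)=hf(0)=0$, so $f\in\cD_0$. Combined with $\cD_0\subseteq \cH_{sym}$, this gives equality. The only non-routine step is the observation in case (ii) that any nonzero boundary value of an element of $\cH_{sym}$ itself produces a forbidden $K_V$; everything else is routine algebra given Lemma \ref{lemma:hsymmg}.
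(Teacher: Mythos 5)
Your proposal is correct and follows essentially the same route as the paper: both reduce to the characterisation $\cH_{sym}=\{f\in\cD(A_{h,k}): V_I f=\tfrac{i}{2}f(0)k\}$ from Lemma \ref{lemma:hsymmg}, subtract the appropriately scaled $K_V$ to land in $\cD_0$ in case (i), and in case (ii) observe that any $f\in\cH_{sym}$ with $f(0)\neq 0$ would itself be a forbidden $K_V$. The only cosmetic difference is that you treat case (i) uniformly rather than splitting off the $f(0)=0$ subcase, and you make the directness of the sum explicit.
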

	\begin{proof}
		By Lemma \ref{lemma:hsymmg}, we have $\mathcal{H}_{sym}=\ker(A_{h,k}-A_{h,k}^*)=\{f\in\mathcal{D}(A_{h,k}): V_If=\frac{i}{2}f(0)k\}$. 
		Case (i): clearly, we have $\spann\{K_V\}\dot{+}\mathcal{D}_0\subset \mathcal{H}_{sym}$. Let us now show the other inclusion. Let $f\in\mathcal{H}_{sym}=\ker(A_{h,k}-A_{h,k}^*)$. If $f(0)=0$, then $f\in\mathcal{H}_{sym}$ implies $V_If=\frac{i}{2}f(0)k=0$ and thus $f\in\mathcal{D}_0$. Hence, assume $f(0)\neq 0$ from now on. By decomposing
		\begin{equation}
			f=f-\frac{f(0)}{K_V(0)}K_V+\frac{f(0)}{K_V(0)}K_V,
		\end{equation}
		it is sufficient to show that $f-\frac{f(0)}{K_V(0)}K_V\in\cD_0$.
		This follows by observing that 
		\begin{equation}
			V_I\left(f-\frac{f(0)}{K_V(0)}K_V\right)=\frac{i}{2}f(0)k-\frac{i}{2}\frac{f(0)}{K_V(0)}K_V(0)k=0\:,
		\end{equation}
		and using that $f'(0)=hf(0)$ and $K_V'(0)=hK_V(0)$ since $f,K_V\in D(A_{h,k})$. Thus, we have shown that $f\in\mathcal{D}_0\dot{+}\spann\{K_V\}$.
		
		Case (ii): Again, it is clear that $\mathcal{D}_0\subset\mathcal{H}_{sym}$. Now, let $f\in\mathcal{H}_{sym}$. If $f(0)\neq 0$, this would contradict the assumptions of Case (ii) since one could choose $K_V:=f$ in this case. Thus, since $f(0)=0$ and since $f\in\mathcal{H}_{sym}$, we get $V_If=\frac{i}{2}f(0)k=0$ and thus $f\in\mathcal{D}_0$. This shows the lemma.
	\end{proof}
	
	Before we proceed, we need to prove the following technical result.
	\begin{lemma}\label{lemma:kerinv} Suppose that $g\in H^1(\R^+)$ and $V$ is measurable. If $Vg=0$ a.e., then this implies that $Vg'=0$ a.e.	
	\end{lemma}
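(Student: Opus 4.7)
The plan is to reduce the statement to the classical fact that, for an $H^1$ function on an interval, the (weak) derivative vanishes almost everywhere on the zero set of the function itself; once this is known, multiplication by $V$ is trivial. Introduce the sets $E := \{x\in\R^+ : V(x)\neq 0\}$ and $Z := \{x\in\R^+ : g(x)=0\}$, where $g$ is identified with its absolutely continuous representative (available because $g\in H^1(\R^+)$). The hypothesis $Vg=0$ a.e.\ says precisely that $|E\setminus Z|=0$. Thus it suffices to establish that $g'(x)=0$ for a.e.\ $x\in Z$; then $Vg'$ vanishes a.e.\ on $E$ (via $E\subset Z$ up to a null set) and vanishes a.e.\ on $\R^+\setminus E$ trivially, giving $Vg'=0$ a.e.

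To produce the classical derivative, I would use that $g\in H^1(\R^+)$ implies $g$ is absolutely continuous, so its pointwise derivative exists and coincides with the weak derivative at almost every point; call this full-measure set $D$. Next, I invoke the Lebesgue density theorem: almost every point of $Z$ is a density point of $Z$. Let $Z^\star \subset Z$ be the subset of points lying in $D$ and being density points of $Z$; then $|Z\setminus Z^\star|=0$.

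The core of the argument is then a one-line computation at every $x_0\in Z^\star$. Because $x_0$ is a density point of $Z$, one can pick a sequence $y_n\in Z$ with $y_n\ne x_0$ and $y_n\to x_0$ (in fact $|Z\cap(x_0-h,x_0+h)|/(2h)\to 1$ forces $Z$ to meet every punctured neighbourhood of $x_0$). Using $g(x_0)=0=g(y_n)$ and the existence of $g'(x_0)$,
\[
0 \ =\ g(y_n)-g(x_0)\ =\ g'(x_0)(y_n-x_0)+o(|y_n-x_0|),
\]
and dividing by $y_n-x_0$ and letting $n\to\infty$ yields $g'(x_0)=0$. Hence $g'=0$ a.e.\ on $Z$, and by Step 1 we conclude $Vg'=0$ a.e.

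There is no real obstacle; the only subtle point is Step~3, where the density theorem is applied to the (arbitrary, only measurable) set $Z$ rather than to $E$, since we have no control over the structure of $E$. It is crucial that differentiability at a.e.\ point of $Z$ is combined with density \emph{in $Z$}, not in $E$: a density point of $Z$ admits a zero-sequence of $g$ accumulating at it, which is exactly what the difference-quotient argument needs. Once this pairing is arranged, the proof is a short and clean application of standard real-variable tools.
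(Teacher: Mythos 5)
Your proof is correct and follows essentially the same route as the paper's: both show that the classical derivative $g'$ vanishes at almost every relevant point by taking difference quotients along a sequence of zeros of $g$ accumulating at that point. The only difference is cosmetic --- you produce the accumulating sequence by applying the Lebesgue density theorem to the full zero set $Z$ of $g$, whereas the paper runs the identical difference-quotient argument on the (essentially smaller) set $\{x: V(x)\neq 0\}$ and instead invokes the more elementary fact that a subset of $\R$ has at most countably many isolated points.
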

\begin{proof} Let $B:=\{x\in\R^+: V(x)=0\}$; this is defined up to a null set $\mathcal{K}$. Since $Vg=0$ a.e., there is a null set $\mathcal{N}$ such that 
$g(x)=0$ if $x\in\R^+\setminus(B\cup \mathcal{N}\cup \mathcal{K})$. Then, up to a countable set $I$, no points in $\R^+\setminus(B\cup \mathcal{N}\cup \mathcal{K})$ are isolated. So, for every $x\in \R^+\setminus(B\cup \mathcal{N}\cup \mathcal{K}\cup I)$, we can find a sequence $\{x_n\}\subset\R^+\setminus(B\cup \mathcal{N}\cup \mathcal{K}\cup I)$, where $x_n\neq x$, such that $x_n\rightarrow x$.
Since $g\in H^1(\R^+)$, we have that $g$ is differentiable -- up to a null set $F$. Consequently, we have 
\begin{equation}
	g'(x)=\lim_{n\rightarrow\infty}\frac{g(x)-g(x_n)}{x-x_n}=0
\end{equation}
for every $x\in\R^+\setminus(B\cup\mathcal{N}\cup\mathcal{K}\cup I\cup F)$ and hence, we conclude that
$V(x)g'(x)=0$ for every $x\in\R^+\setminus(\mathcal{N}\cup\mathcal{K}\cup I\cup F)$. But since $\mathcal{N}, \mathcal{K}, I,$ and $F$ are all null sets, this implies that $Vg'=0$ a.e. and thus the lemma.
\end{proof}
	We are now ready to prove a uniqueness result in $\cH_{sa}$.
	
	\begin{lemma} \label{lemma:resolvent}
	 Let $f\in\mathcal{H}_{sa}$ and assume there exists a $\lambda\in\C^-$ such that $(A_{h,k}-\lambda)^{-1}f\in\mathcal{D}_0$. Then $f=0$.
	\end{lemma}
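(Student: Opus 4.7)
The plan is to let $u := (A_{h,k} - \lambda)^{-1} f$, observe that $u \in \mathcal{D}_0 \cap \mathcal{H}_{sa}$, reduce the problem to $u = 0$, and then exploit the totality of the defect solutions $\phi_\mu$ from Lemma \ref{lemma:simple}. First I would record that $u \in \mathcal{D}_0$ by hypothesis, and since $\mathcal{H}_{sa}$ reduces $A_{h,k}$ (hence its resolvent), also $u \in \mathcal{H}_{sa}$. Using $u(0) = u'(0) = 0$ and $V_I u = 0$, the boundary and dissipative terms in the action of $A_{h,k}$ collapse to $A_{h,k} u = -u'' + V_R u = S u$, so $f = (S - \lambda) u$. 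Since $S$ is symmetric and $\lambda \notin \R$, $S - \lambda$ is injective on $\mathcal{D}(S)$, so showing $f = 0$ is equivalent to showing $u = 0$. Moving $S$ onto the adjoint and using $S^* \phi_\mu = \bar{\mu} \phi_\mu$ gives
\[
\langle \phi_\mu, f \rangle \;=\; \langle \phi_\mu, (S - \lambda) u \rangle \;=\; (\mu - \lambda) \langle \phi_\mu, u \rangle,
\]
so by Lemma \ref{lemma:simple} it suffices to show $\langle \phi_\mu, f \rangle = 0$ for all $\mu \in \C \setminus \R$ (the case $\mu = \lambda$ being automatic).

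Next I would split into the two cases of Lemma \ref{lemma:2cases}. In case (ii) with $\mathcal{H}_{sym} = \mathcal{D}_0$, one has $(A_{h,k} - \mu)^{-1} f \in \mathcal{H}_{sa} \cap \mathcal{D}(A_{h,k}) \subset \mathcal{H}_{sym} = \mathcal{D}_0$ for every $\mu \in \C^-$, and the argument from the proof of Lemma \ref{lemma:D0} applies verbatim to $f$ (with an analogous argument for $\mu \in \C^+$ via $A_{h,k}^*$), yielding $\langle \phi_\mu, f \rangle = 0$. In case (i) with $\mathcal{H}_{sym} = \mathcal{D}_0 \,\dot{+}\, \spann\{K_V\}$, I would decompose $(A_{h,k} - \mu)^{-1} f = \alpha_\mu K_V + w_\mu$ with $w_\mu \in \mathcal{D}_0$ and $\alpha_\mu K_V(0) = [(A_{h,k} - \mu)^{-1} f](0)$; the hypothesis forces $\alpha_\lambda = 0$. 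Applying $(A_{h,k} - \mu)$ and using $V_I K_V = \tfrac{i}{2} K_V(0) k$ (so that $(A_{h,k} - \mu) K_V = (S^* - \mu) K_V + \tfrac{1}{2} K_V(0) k$), then pairing with $\phi_\mu$ and invoking the Lagrange identity
\[
\langle \phi_\mu, (S^* - \mu) K_V \rangle \;=\; \bigl( h - \overline{\phi_\mu'(0)} \bigr) K_V(0)
\]
(which uses $\phi_\mu(0) = 1$, $K_V'(0) = h K_V(0)$, and decay at infinity) together with $\langle \phi_\mu, (S - \mu) w_\mu \rangle = 0$, one obtains
\[
\langle \phi_\mu, f \rangle \;=\; \alpha_\mu K_V(0) \Bigl[ h - \overline{\phi_\mu'(0)} + \tfrac{1}{2} \langle \phi_\mu, k \rangle \Bigr].
\]

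The hard step is then to propagate the single vanishing $\alpha_\lambda = 0$ to $\alpha_\mu \equiv 0$ on all of $\C \setminus \R$. Since $f \in \mathcal{H}_{sa}$ and $B := A_{h,k}|_{\mathcal{H}_{sa}}$ is selfadjoint, the scalar function $\mu \mapsto \alpha_\mu K_V(0) = [(B - \mu)^{-1} f](0)$ extends analytically from $\C^-$ across $\R$ to $\C^+$ with controlled behaviour at infinity; the plan is to combine this with a Nevanlinna-type rigidity argument exploiting the symmetry provided by the selfadjointness of $B$ to force $\alpha_\mu \equiv 0$ from its zero at the single point $\lambda$. This is the main obstacle, since evaluation at $x = 0$ is not represented by an $L^2$-inner product, so the standard Herglotz property does not apply off-the-shelf and a more delicate argument through the spectral decomposition of $B$ (or a suitably adapted Poisson-type representation) is needed to complete this last step.
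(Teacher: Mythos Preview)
Your setup is correct through the derivation of the identity
\[
\langle \phi_\mu, f \rangle \;=\; \alpha_\mu K_V(0)\Bigl[ h - \overline{\phi_\mu'(0)} + \tfrac{1}{2}\langle \phi_\mu, k\rangle \Bigr],
\]
but the proof is incomplete precisely where you say it is, and the proposed fix does not work. Knowing that the analytic function $\mu \mapsto \alpha_\mu$ vanishes at the single point $\lambda$ is far too little to force it to vanish identically: Herglotz/Nevanlinna functions can and do have isolated zeros, and selfadjointness of $B=A_{h,k}|_{\cH_{sa}}$ only gives separate analyticity on $\C^+$ and $\C^-$, not continuation across the real line. Since at this stage nothing is known about $\cH_{sa}$ (the bound $\dim\cH_{sa}\le 1$ is a \emph{consequence} of this lemma, not an input), there is no spectral-theoretic rigidity available either. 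So the ``hard step'' is not just technically delicate --- as formulated it is a genuine gap.

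The paper closes this gap by a different manoeuvre that avoids the propagation problem altogether. Instead of decomposing $(A_{h,k}-\mu)^{-1}f$, it applies the resolvent identity and decomposes $(A_{h,k}-\mu)^{-1}g$, where $g:=(A_{h,k}-\lambda)^{-1}f\in\cD_0$. Writing $(A_{h,k}-\mu)^{-1}g=g_0+\tau K_V$ with $g_0\in\cD_0$ and applying $(A_{h,k}-\mu)$ gives
\[
g \;=\; (S-\mu)g_0 \;+\; \tau\bigl(-K_V''+V_RK_V-iV_IK_V-\mu K_V\bigr).
\]
Now $g\in\ker(V_I)$ by hypothesis, and $(S-\mu)g_0\in\ker(V_I)$ by Lemma~\ref{lemma:kerinv}; testing the $K_V$--term against $V_IK_V$ shows it lies in $\ker(V_I)$ for at most one value $\tilde\mu\in\C^-$ (the relation is affine in $\mu$ with nonzero slope $\|V_I^{1/2}K_V\|^2$). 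Hence $\tau=0$ for every $\mu\in\C^-\setminus\{\tilde\mu\}$, so $(A_{h,k}-\mu)^{-1}f\in\cD_0$ for all such $\mu$, and the orthogonality $\langle\phi_\mu,f\rangle=0$ follows immediately as in Lemma~\ref{lemma:D0}; the missing point is filled in by the continuity Lemma~\ref{lemma:kerspan}. The same argument with $A_{h,k}^*$ covers $\C^+$. In short, the paper replaces your single analytic zero by an \emph{algebraic} constraint (membership in $\ker(V_I)$) that holds for all but finitely many $\mu$, and this is exactly what is needed.
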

	\begin{proof}  Take any $\mu\in\C^-$. By the resolvent identity, we get
		\begin{equation} \label{eq:resf}
			(A_{h,k}-\mu)^{-1}f=(A_{h,k}-\lambda)^{-1}f+(\mu-\lambda)(A_{h,k}-\mu)^{-1}(A_{h,k}-\lambda)^{-1}f\:.
		\end{equation}
		By assumption, we have $g:=(A_{h,k}-\lambda)^{-1}f\in\mathcal{D}_0$. Let us thus focus on the second term $(A_{h,k}-\mu)^{-1}g$. Using that $(A_{h,k}-\mu)^{-1}g\in\mathcal{H}_{sa}\cap\mathcal{D}(A_{h,k})\subset \mathcal{H}_{sym}$, we know from Lemma \ref{lemma:2cases} that there exist $g_0\in\mathcal{D}_0$ and $\tau\in\C$ such that
		\begin{equation} \label{eq:negativeimaginary}
			(A_{h,k}-\mu)^{-1}g=g_0+\tau K_V    
		\end{equation}
		or -- equivalently -- 
		\begin{align} 
			g&=(A_{h,k}-\mu)(A_{h,k}-\mu)^{-1}g \nonumber \\
			&=-g_0''+V_Rg_0-\mu g_0+\tau\left( -K_V''+V_RK_V+iV_IK_V+K_V(0)k-\mu K_V \right) \nonumber \\  &=-g_0''+V_Rg_0-\mu g_0+\tau\left(-K_V''+V_RK_V-iV_IK_V-\mu K_V\right)\:. \label{eq:ginD0}
		\end{align}
		Now, observe that since $g,g_0\in\cD_0$ we have $g\in\ker(V_I)$ as well as $-g_0''+V_Rg_0-\mu g_0\in\ker(V_I)$, which follows from a twofold application of Lemma \ref{lemma:kerinv} to $g_0$. Next, let us consider $-K_V''+V_RK_V-iV_IK_V-\mu K_V$. We want to show that there exists at most one $\tilde{\mu}\in\C^-$ such that $-K_V''+V_RK_V-iV_IK_V-\tilde{\mu} K_V\in\ker(V_I)$. To this end, assume the following equality holds
		\begin{equation} \label{eq:notkern}
			\langle V_IK_V, -K_V''+V_RK_V-iV_IK_V-\mu K_V\rangle=0. 
		\end{equation}
		Then,
		\begin{eqnarray} \label{eq:notkern2}
			0&=& \langle V_IK_V, -K_V''+V_RK_V-iV_IK_V-\mu K_V\rangle\\
			&=& -\langle V_IK_V,K_V''+V_RK_V\rangle-i\|V_IK_V\|^2-\mu\|V_I^{1/2}K_V\|^2. \nonumber
		\end{eqnarray}
		Since $K_V\notin\ker(V_I)$, we have $\|V_I^{1/2}K_V\|^2>0$ and thus -- depending on whether 
		\begin{equation}
			\tilde{\mu}:=\frac{-\langle V_IK_V,K_V''+V_RK_V\rangle-i\|V_IK_V\|^2}{\|V_I^{1/2}K_V\|^2}
		\end{equation}
		is an element of $\C^-$ or not --
		there exists at most one solution $\tilde{\mu}\in\C^-$ such that \eqref{eq:notkern} is satisfied. Hence, if $\mu\in\C^-\setminus\{\tilde{\mu}\}$, we have $\langle V_IK_V, -K_V''+V_RK_V-iVK_V-\mu K_V\rangle\neq 0$ and thus $-K_V''+V_RK_V-iV_IK_V-\mu K_V\notin\ker(V_I)$. So, if $\mu\neq\tilde{\mu}$, then Equation \eqref{eq:ginD0} implies that $\tau=0$. Consequently, $(A_{h,k}-\mu)^{-1}g=(A_{h,k}-\mu)^{-1}(A_{h,k}-\lambda)^{-1}f=g_0\in\mathcal{D}_0$ and \eqref{eq:resf} therefore implies that $(A_{h,k}-\mu)^{-1}f\in\mathcal{D}_0$ for all $\mu\in\C^-\setminus\{\tilde{\mu}\}$. Letting $\phi_\mu$ be as in the proof of Lemma \ref{lemma:kerspan}, we get by the same argument as in \eqref{eq:ortho} that $\langle\phi_\mu,f\rangle=0$ for all $\mu\in\C^-\setminus\{\tilde{\mu}\}$.
		%
		
		Next, since $f\in \cH_{sa}$, we have $g\in \cH_{sa}$ and therefore, $(A_{h,k}^*-\overline{\mu})^{-1}g \in \cH_{sa}\cap \cD(A_{h,k}^*)$. Now, $\cH_{sa}\cap \cD(A_{h,k}^*)\subset \cH_{sym}(A_{h,k}^*)=\cH_{sym}$. Thus, there exist $\hat{g}_0\in\mathcal{D}_0$ and $\hat{\tau}\in\C$ such that $(A_{h,k}^*-\overline{\mu})^{-1}g=\hat{g}_0+\hat{\tau}K_V$. By repeating the same argument as was presented after Equation \eqref{eq:negativeimaginary}, we may conclude that there exists at most one $\hat{\mu}\in\C^+$ such that $(A^*_{h,k}-\overline{\mu})^{-1}f\in\mathcal{D}_0$ for all $\overline{\mu}\in\C^+\setminus\{\hat{\mu}\}$. Consequently, we have shown
		\begin{equation}
			\langle \phi_\mu,f\rangle=0
		\end{equation}
		for all $\mu\in\C\setminus(\R\cup\{\tilde{\mu},\overline{\hat{\mu}}\})$. By Lemma \ref{lemma:kerspan}, we have 
		\begin{equation}
			0\leq|\langle \phi_{\tilde\mu},f\rangle|=|\langle \phi_{\tilde\mu},f\rangle-\langle \phi_\mu,f\rangle|\leq \| \phi_{\tilde\mu}-\phi_\mu\|\|f\|\overset{\mu\rightarrow\tilde{\mu}}{\longrightarrow}0\:.
		\end{equation}
		Analogously, it also follows that $\langle \phi_{\overline{\hat{\mu}}},f\rangle=0$. Arguing similarly as in Lemma \ref{lemma:D0}, we get $f=0$, which finishes the proof.
	\end{proof}
	
	\begin{cor} \label{coro:sadim}
 If $\mathcal{H}_{sym}=\mathcal{D}_0\dot{+}\spann\{K_V\}$, then $\dim\mathcal{H}_{sa}\leq 1$.
	\end{cor}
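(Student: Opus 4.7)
The plan is to leverage Lemma \ref{lemma:resolvent} (which says a selfadjoint-part vector whose resolvent lies in $\mathcal{D}_0$ must vanish) together with the one-dimensional ``extra direction'' $K_V$ permitted by the hypothesis $\mathcal{H}_{sym} = \mathcal{D}_0 \dotplus \spann\{K_V\}$. The rough idea: any two vectors of $\mathcal{H}_{sa}$ produce, after applying $(A_{h,k}-\lambda)^{-1}$, two elements of $\mathcal{H}_{sym}$, each of which decomposes with a single scalar coefficient in front of $K_V$; an appropriate linear combination kills that coefficient and thus lies in $\mathcal{D}_0$, which by Lemma \ref{lemma:resolvent} forces linear dependence.

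In detail, fix $\lambda\in\C^-$ and let $f\in\mathcal{H}_{sa}$ be arbitrary. Since $\mathcal{H}_{sa}$ is a reducing subspace for $A_{h,k}$, we have $(A_{h,k}-\lambda)^{-1}f\in\mathcal{H}_{sa}\cap\mathcal{D}(A_{h,k})$, and by Remark \ref{rem:kuzhel} this intersection is contained in $\mathcal{H}_{sym}$. Using the hypothesis $\mathcal{H}_{sym}=\mathcal{D}_0\dot{+}\spann\{K_V\}$, we may write uniquely
\begin{equation*}
(A_{h,k}-\lambda)^{-1}f = g_0(f) + \tau(f)\,K_V
\end{equation*}
with $g_0(f)\in\mathcal{D}_0$ and $\tau(f)\in\C$. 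The map $f\mapsto\tau(f)$ is a linear functional on $\mathcal{H}_{sa}$.

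Now suppose, for contradiction, that $\dim\mathcal{H}_{sa}\geq 2$ and pick $f_1,f_2\in\mathcal{H}_{sa}$ linearly independent. If $\tau(f_1)=\tau(f_2)=0$, then both $(A_{h,k}-\lambda)^{-1}f_i\in\mathcal{D}_0$, so Lemma \ref{lemma:resolvent} forces $f_1=f_2=0$, a contradiction. Otherwise, without loss of generality $\tau(f_1)\neq 0$, and the vector $\tilde f := f_2-\frac{\tau(f_2)}{\tau(f_1)}f_1$ lies in $\mathcal{H}_{sa}$ and satisfies $\tau(\tilde f)=0$, hence $(A_{h,k}-\lambda)^{-1}\tilde f\in\mathcal{D}_0$. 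Lemma \ref{lemma:resolvent} then gives $\tilde f=0$, contradicting the linear independence of $f_1$ and $f_2$. Thus $\dim\mathcal{H}_{sa}\leq 1$.

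I expect no real obstacle here: the main point is already packaged in Lemma \ref{lemma:resolvent}, and the only thing to verify carefully is that $(A_{h,k}-\lambda)^{-1}f$ actually lands in $\mathcal{H}_{sym}$ (which requires only that $\mathcal{H}_{sa}$ is a reducing subspace together with the general inclusion $\mathcal{H}_{sa}\cap\mathcal{D}(A_{h,k})\subset\mathcal{H}_{sym}$). No delicate issue with the choice of $\lambda$ arises because we need the conclusion for just one $\lambda\in\C^-$.
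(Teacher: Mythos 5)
Your proposal is correct and follows essentially the same route as the paper: apply the resolvent to land in $\mathcal{H}_{sym}=\mathcal{D}_0\dot{+}\spann\{K_V\}$, take a linear combination of two candidate vectors that cancels the $K_V$-coefficient, and invoke Lemma \ref{lemma:resolvent} to force linear dependence. Your explicit handling of the case where a coefficient $\tau(f_i)$ vanishes is a small tidiness improvement over the paper's phrasing, but the argument is the same.
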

	\begin{proof} Fix an arbitrary $\lambda\in\C^-$. Now, assume there exist two linearly independent $f, g\in\mathcal{H}_{sa}$. Since $\cH_{sa}\cap \cD(A_{h,k})\subset \cH_{sym}$, we know that there exist $f_0,g_0\in\mathcal{D}_0$ and $\tau_f,\tau_g\in\C\setminus\{0\}$ such that
		\begin{equation}
			(A_{h,k}-\lambda)^{-1}f=f_0+\tau_f K_V  \mbox{ and }  (A_{h,k}-\lambda)^{-1}g=g_0+\tau_g K_V\:.
		\end{equation}
		This implies
		\begin{equation}
			(A_{h,k}-\lambda)^{-1}(\tau_g f-\tau_fg)= \tau_g f_0-\tau_f g_0\in\mathcal{D}_0\:, 
		\end{equation}
		which by Lemma \ref{lemma:resolvent} implies $\tau_g f-\tau_fg=0$ and hence that $f$ and $g$ are linearly dependent.
	\end{proof}
	
	We are now prepared to show the main result for the critical case with dissipative boundary conditions. It turns out that for the operator $A_{h,k}$ to have a non-trivial reducing selfadjoint subspace, a few rather restrictive conditions have to be met:
	
	\begin{thm}\label{thm:kv} Suppose that equality holds in \eqref{eq:disscond}. The maximally dissipative extension $A_{h,k}$ has a non-trivial reducing selfadjoint subspace if and only if there exists a function $K_V\in\mathcal{D}(A_{h,k})$ with $K_V(0)\neq 0$ such that
		\begin{itemize}
			\item[(i)] $V_IK_V=\frac{i}{2}K_V(0)k$ and
			\item[(ii)] $A_{h,k}K_V
			\in\spann\{K_V\}$\:.
		\end{itemize}
		Moreover, if both Conditions (i) and (ii) are satisfied, then $\mathcal{H}_{sa}=\spann\{K_V\}$.
		\label{thm:cnsa}
	\end{thm}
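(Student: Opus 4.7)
My plan is to establish both implications by exploiting the eigenvector structure forced by (ii) in one direction and the explicit description of $\mathcal{H}_{sym}$ from Lemma \ref{lemma:2cases} in the other, with Corollary \ref{coro:sadim} pinning down $\mathcal{H}_{sa}$ at the end.

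For the \emph{if} direction, assume $K_V$ satisfies (i) and (ii). Then $K_V\in\mathcal{H}_{sym}$ by Lemma \ref{lemma:hsymmg}, and (ii) gives $A_{h,k}K_V=\mu K_V$ for some $\mu\in\C$. Since $K_V\in\mathcal{H}_{sym}=\ker(A_{h,k}-A_{h,k}^*)$ we also have $A_{h,k}^*K_V=\mu K_V$, and the identity $\Im\langle K_V,A_{h,k}K_V\rangle=0$ forces $\mu\in\R$. I would then verify that $\mathcal{M}:=\spann\{K_V\}$ reduces $A_{h,k}$ via the equivalent resolvent criterion stated in Section \ref{sec:bg}: clearly $(A_{h,k}-\lambda)^{-1}K_V=(\mu-\lambda)^{-1}K_V\in\mathcal{M}$, and for $f\in\mathcal{M}^\perp$,
\[
\langle(A_{h,k}-\lambda)^{-1}f,K_V\rangle=\langle f,(A_{h,k}^*-\overline\lambda)^{-1}K_V\rangle=(\mu-\overline\lambda)^{-1}\langle f,K_V\rangle=0,
\]
so $\mathcal{M}^\perp$ is invariant as well. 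Since $A_{h,k}|_{\mathcal{M}}$ is multiplication by $\mu\in\R$, it is selfadjoint, whence $\mathcal{M}\subseteq\mathcal{H}_{sa}$.

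For the \emph{only if} direction, suppose $\mathcal{H}_{sa}\neq\{0\}$. Case (ii) of Lemma \ref{lemma:2cases} is ruled out at once, since then $\mathcal{H}_{sym}=\mathcal{D}_0$ and Lemma \ref{lemma:D0} would give $A_{h,k}$ cns, contradicting the non-triviality of $\mathcal{H}_{sa}$. Hence some $K_V$ satisfying (i) exists, and Corollary \ref{coro:sadim} yields $\dim\mathcal{H}_{sa}=1$, so $\mathcal{H}_{sa}=\spann\{\eta\}$ with $A_{h,k}\eta=\mu\eta$ for some $\mu\in\R$ (as $A_{h,k}|_{\mathcal{H}_{sa}}$ is a selfadjoint $1\times 1$ operator). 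Since $\eta\in\mathcal{H}_{sa}\cap\mathcal{D}(A_{h,k})\subseteq\mathcal{H}_{sym}$, Lemma \ref{lemma:2cases}(i) writes $\eta=\eta_0+\tau K_V$ with $\eta_0\in\mathcal{D}_0$ and $\tau\in\C$. The main technical point is to exclude $\tau=0$: if it held, then $\eta\in\mathcal{D}_0$ would satisfy $-\eta''+V_R\eta=\mu\eta$ with $\eta(0)=\eta'(0)=0$, and ODE uniqueness would force $\eta=0$, a contradiction. Therefore $\tau\neq 0$, and I may replace $K_V$ by $\tilde K_V:=\eta=\tau K_V+\eta_0$, which lies in $\mathcal{D}(A_{h,k})$ with $\tilde K_V(0)=\tau K_V(0)\neq 0$ and satisfies (i) (because $V_I\tilde K_V=\tau V_I K_V=\tfrac{i}{2}\tau K_V(0)k=\tfrac{i}{2}\tilde K_V(0)k$) and now also (ii) (because $A_{h,k}\tilde K_V=\mu\tilde K_V$).

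The identification $\mathcal{H}_{sa}=\spann\{K_V\}$ then follows by combining the inclusion $\spann\{K_V\}\subseteq\mathcal{H}_{sa}$ from the \emph{if} direction with the bound $\dim\mathcal{H}_{sa}\leq 1$ from Corollary \ref{coro:sadim}. The main obstacle in the argument is the ``upgrade'' step in the \emph{only if} direction---turning an arbitrary $K_V$ satisfying only (i) into one satisfying both (i) and (ii)---which is resolved by recognising that $\eta$ itself, written as $\eta_0+\tau K_V$ by Lemma \ref{lemma:2cases}(i) with $\tau\neq 0$, is the natural replacement.
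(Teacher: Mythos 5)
Your proof is correct and follows essentially the same route as the paper: Lemma \ref{lemma:D0} and Lemma \ref{lemma:2cases} produce a $K_V$ satisfying (i), Corollary \ref{coro:sadim} pins down $\dim\mathcal{H}_{sa}=1$, and the spanning eigenvector of $\mathcal{H}_{sa}$ is taken as the corrected $K_V$. The only (harmless) deviation is that you rule out $\mathcal{H}_{sa}\subset\mathcal{D}_0$ by ODE uniqueness for the initial value problem $-\eta''+V_R\eta=\mu\eta$, $\eta(0)=\eta'(0)=0$, whereas the paper applies Lemma \ref{lemma:resolvent} with $\lambda=-i$.
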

	\begin{proof}
		Let us first assume that $A_{h,k}$ has a non-trivial reducing selfadjoint subspace. Combining Lemma \ref{lemma:D0} with Lemma \ref{lemma:2cases}, this implies that there exists a function $\tilde{K}_V$ with $\tilde{K}_V(0)\neq 0$ such that $V_I\tilde{K}_V=\frac{i}{2}\tilde{K}_V(0)k$ and that $\cH_{sym}=\cD_0\dot{+}\spann\{\tilde{K}_V\}$. 
		By Corollary \ref{coro:sadim}, we know that $\dim\cH_{sa}= 1$. As this is finite-dimensional, by Lemma \ref{lemma:spacedense}, we have $\cH_{sa}\cap\cD(A_{h,k})=\cH_{sa}$. Consequently, we have $\cH_{sa}\subset\cH_{sym}=\cD_0\dot{+}\spann\{\tilde{K}_V\}$. Now, it is not possible that $\cH_{sa}=\spann\{f_0\}$ for some $f_0\in\cD_0$, since we would have $(A_{h,k}+i)^{-1}f_0\in\cD_0$ and thus by Lemma \ref{lemma:resolvent}, we would have $f_0=0$. Hence, $\cH_{sa}=\spann\{f_0+\tilde{K}_V\}$ for some $f_0\in\cD_0$. Set $K_V=f_0+\tilde{K}_V$. Since $f_0\in\cD_0$, the function $K_V$ satisfies $K_V(0)\neq 0$ as well as Condition (i). Since $\cH_{sa}$ is a reducing subspace, $A_{h,k}K_{V}\in\spann\{K_{V}\}$, as required.
		
		Let us now assume that there exists a $K_V\in\cD(A_{h,k})$ with $K_V(0)\neq 0$ such that Conditions (i) and (ii) are both satisfied. From Condition (ii), we get $A_{h,k}K_V=zK_V$ for some $z\in\C$. Moreover, since $K_V\in\cH_{sym}$, we have $A_{h,k}K_V=A^*_{h,k}K_V=zK_V$ and therefore $\spann\{K_V\}$ reduces $A_{h,k}$. This also implies that $z=\overline{z}\in\R$ and thus $\cH_{sa}=\spann\{K_V\}$.
	\end{proof}

	We summarize the results shown so far in this chapter:
	\begin{enumerate}
		\item 	$\Im h > 0$ :
		\begin{enumerate}
			\item $\Im(h)>\frac{1}{4}\|V_I^{-1/2}k\|^2$:  $A_{h,k}$ is completely non-selfadjoint;
			\item $\Im(h)=\frac{1}{4}\|V_I^{-1/2}k\|^2$:
			\begin{enumerate}
				\item the conditions (i) and (ii) in Theorem \ref{thm:kv} are not satisfied: $A_{h,k}$ is completely non-selfadjoint;
				\item the conditions (i) and (ii) in Theorem \ref{thm:kv} are  satisfied: $A_{h,k}$ has a non-trivial
				reducing selfadjoint subspace $\mathcal{H}_{sa}=\spann\{K_V\}$;
			\end{enumerate}
		\end{enumerate}
		\item  $h\in\R\cup\{\infty\}$:
		\begin{enumerate}
			\item $V_I\equiv 0$:  $A_{h,k}$ is selfadjoint;
			\item $V_I\not\equiv 0$: $A_{h,k}$ is completely non-selfadjoint.
		\end{enumerate}
	\end{enumerate}

	\subsection{Construction of maximally dissipative extensions with a real eigenvalue}
	
	\begin{thm}\label{thm:ev} Let the operator $A_{min}$ be given by \eqref{eq:minimal} and let $\lambda\in\R$. If there exists a non-zero solution $g\in H^2(\R^+)$ to the differential equation 
		\begin{equation} \label{eq:eivallambda}
			A_{min}^*g=-g''+V_Rg-iV_Ig=\lambda g\:,
		\end{equation}	
		then there exists a unique maximally dissipative extension $A_{\lambda}$ of $A_{min}$ for which $\ker(A_\lambda-\lambda)=\mbox{span}\{g\}$ is a non-trivial reducing selfadjoint subspace. \label{prop:1}
	\end{thm}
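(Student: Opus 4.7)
My plan is to construct $A_\lambda$ explicitly as $A_{h,k}$ from the parametrisation \eqref{eq:mdoext} and then appeal to Theorem \ref{thm:kv}. The first step will be to extract the boundary data of $g$ from \eqref{eq:eivallambda}: multiplying the equation by $\overline{g}$, integrating over $\R^+$, integrating by parts once (the boundary term at infinity vanishes because $g\in H^2(\R^+)$), and taking imaginary parts yields the identity
\begin{equation*}
\Im\bigl(g'(0)\overline{g(0)}\bigr) \;=\; \int_{\R^+} V_I\,|g|^2\,dx.
\end{equation*}
I expect the main obstacle to be ruling out the possibility $g(0)=0$. In that case the identity forces $V_I g = 0$ a.e., and then $g\in\cD(A_{\infty,0})$ with $A_{\infty,0}g=\lambda g$, so $\spann\{g\}$ would be a non-trivial reducing selfadjoint subspace for $A_{\infty,0}$, contradicting Theorem \ref{thm:sabc} when $V_I\not\equiv 0$; when $V_I\equiv 0$ the operator $A_{\infty,0}$ is already selfadjoint and the theorem is immediate. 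So we may assume $g(0)\neq 0$ throughout.

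With that in hand, I would define
\begin{equation*}
h \;:=\; \frac{g'(0)}{g(0)}, \qquad k(x) \;:=\; -\frac{2i\,V_I(x)\,g(x)}{g(0)},
\end{equation*}
and check that $(h,k)$ indeed parametrises an MDO via \eqref{eq:mdoext}. Since $V_I\in L^\infty$ and $g\in L^2$, one has $k\in\Ran(V_I^{1/2})$ with $V_I^{-1/2}k=-2iV_I^{1/2}g/g(0)$, and the identity above translates directly into the critical equality $\tfrac{1}{4}\|V_I^{-1/2}k\|^2=\Im(h)$, with $\Im(h)\geq 0$ automatic. I then set $A_\lambda:=A_{h,k}$, placing us precisely in the setting of Theorem \ref{thm:kv}.

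Finally I would take $K_V:=g$ and verify the two conditions of Theorem \ref{thm:kv}: (i) $V_I g=\tfrac{i}{2}g(0)k$ holds by the very definition of $k$, and (ii) is a one-line substitution using \eqref{eq:eivallambda},
\begin{equation*}
A_\lambda g \;=\; -g''+V_R g + iV_I g + g(0)k \;=\; -g''+V_R g - iV_I g \;=\; \lambda g \;\in\;\spann\{g\}.
\end{equation*}
Theorem \ref{thm:kv} then yields $\cH_{sa}(A_\lambda)=\spann\{g\}$, which is non-trivial since $g(0)\neq 0$ forces $g\notin\cD_0$; in particular $\ker(A_\lambda-\lambda)=\spann\{g\}$. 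Uniqueness is then a short check: if $A_{\tilde h,\tilde k}$ is any MDO extension of $A_{min}$ with $g\in\cD(A_{\tilde h,\tilde k})$ and $A_{\tilde h,\tilde k}g=\lambda g$, the boundary condition at $0$ together with $g(0)\neq 0$ forces $\tilde h=g'(0)/g(0)=h$, and a direct comparison of $A_{\tilde h,\tilde k}g=\lambda g$ with \eqref{eq:eivallambda} forces $g(0)\tilde k=-2iV_I g$, i.e.\ $\tilde k=k$.
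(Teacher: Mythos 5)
Your construction coincides with the paper's: the paper normalises $\eta:=-\tfrac{2i}{g(0)}g$ and sets $A_\lambda:=A_{\frac{i}{2}\eta'(0),\,V_I\eta}$, which is exactly your pair $h=g'(0)/g(0)$, $k=-2iV_Ig/g(0)$, and it likewise concludes by checking conditions (i) and (ii) of Theorem \ref{thm:kv}; your uniqueness argument is a spelled-out version of the paper's one-line remark that \eqref{eq:eivallambda} has at most one $H^2(\R^+)$-solution up to scalars.

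There is, however, one gap. After excluding $g(0)=0$ you assert that ``$\Im(h)\geq 0$ is automatic'' and then invoke Theorem \ref{thm:kv}. But that theorem is proved under the standing assumption of the dissipative-boundary-condition subsection, namely $\tfrac14\|V_I^{-1/2}k\|^2=\Im(h)>0$; if $\Im(h)=0$ it is not applicable, and indeed in that case $k=0$ a.e.\ and Theorem \ref{thm:sabc} says $A_{h,0}$ is completely non-selfadjoint, so the conclusion $\cH_{sa}=\spann\{g\}$ would be false. You must therefore also exclude the case $g(0)\neq 0$ with $\Im\bigl(g'(0)\overline{g(0)}\bigr)=0$, i.e.\ $V_Ig=0$ a.e.\ and $h=g'(0)/g(0)$ real. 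The exclusion is your own $g(0)=0$ argument run with a finite real $h$ in place of $h=\infty$: then $g\in\cD(A_{h,0})=\cD(A_{h,0}^*)$ satisfies $A_{h,0}g=A_{h,0}^*g=\lambda g$, so $\spann\{g\}$ would be a non-trivial reducing selfadjoint subspace of $A_{h,0}$, contradicting Theorem \ref{thm:sabc} (under the paper's standing assumption $V_I\not\equiv 0$). This is precisely how the paper arranges the proof: it shows $\Im\bigl(\overline{g(0)}g'(0)\bigr)\neq 0$ outright, which yields $g(0)\neq 0$ and $\Im(h)>0$ in a single step. With that one case added, your proof is complete and matches the paper's.
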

	\begin{proof} Let $g\in H^2(\R^+)$ be a non-trivial solution of \eqref{eq:eivallambda}. Multiplying both sides of \eqref{eq:eivallambda} by $\overline{g}$ and integrating from $0$ to $\infty$ yields -- using integration by parts --
		\begin{equation} \label{eq:1}
			\overline{g(0)}g'(0)=\int_0^\infty (-V_R(x)+iV_I(x)+\lambda)|g(x)|^2\mbox{d}x-\int_0^\infty|g'(x)|^2\mbox{d}x\:.
		\end{equation}
		Let us now argue that $\Imag\overline{g(0)}g'(0)\neq 0$. By way of contradiction, assume that $\Imag\overline{g(0)}g'(0)=0$. Then by comparing the imaginary parts in \eqref{eq:1}, we get
		\begin{equation}
			\int_0^\infty V_I(x)|g(x)|^2\mbox{d}x=0\:,
		\end{equation}
		which is equivalent to $g\in\ker(V_I)$. Now, since $\Imag\overline{g(0)}g'(0)=0$, we have $g'(0)=hg(0)$ for some $h\in\R\cup\{\infty\}$. Then $g\in\cD(A_{h,0})=\cD(A_{h,0}^*)$ -- the maximally dissipative extension of $A_{min}$ with the selfadjoint boundary condition $g'(0)=hg(0)$. Since $g\in\ker(V_I)$, we get
		\begin{equation}
			A_{h,0}g=-g''+V_Rg+iV_Ig=-g''+V_Rg-iV_Ig=A_{h,0}^*g=A_{min}^*g=\lambda g\:.
		\end{equation}
		But this would mean that $g\in\cH_{sa}(A_{h,0})$ contradicting the fact that $A_{h,0}$ is completely non-selfadjoint, which was shown in Theorem \ref{thm:sabc}. Hence, $\Imag \overline{g(0)}g'(0)\neq 0$, which implies in particular that $g(0)\neq 0$. Now, let $\eta:=\frac{-2i}{g(0)}g$ be the unique element in $\mbox{span}\{g\}$ which satisfies $\eta(0)=-2i$. Plugged into \eqref{eq:1}, this yields
		\begin{equation}
			\overline{\eta(0)}\eta'(0)=2i\eta'(0)=\int_0^\infty (-V_R(x)+iV_I(x)+\lambda)|\eta(x)|^2\mbox{d}x-\int_0^\infty|\eta'(x)|^2\mbox{d}x
		\end{equation}
		and by taking imaginary parts, we get
		\begin{equation} \label{eq:4}
			\Real \eta'(0)=\frac{1}{2}\int_0^\infty V_I(x)|\eta(x)|^2\mbox{d}x>0\:.
		\end{equation}
		We now claim that the operator $A_\lambda:=A_{\frac{i}{2}\eta'(0), V_I\eta}$ (defined in \eqref{eq:mdoext}) is a maximally dissipative extension of $A_{min}$, which has a non-trivial reducing selfadjoint subspace, which is given by $\ker(A_\lambda-\lambda)=\mbox{span}\{\eta\}$. Firstly, note that it follows from \eqref{eq:4} that we are in the critical case. It can now be directly verified that $\eta\in\cD(A_{\frac{i}{2}\eta'(0), V_I\eta})$ satisfies Conditions (i) and (ii) in Theorem \ref{thm:cnsa}. Thus, it has a reducing selfadjoint subspace spanned by $\eta$ corresponding to the eigenvalue $\lambda$. Uniqueness follows from the fact that there exists at most one solution to \eqref{eq:eivallambda} in $H^2(\R^+)$ (the problem is limit-circle at zero and limit-point at infinity).
	\end{proof}
	
	In what follows, we will work with the following definition of the essential spectrum of $A$ from \cite{EE18}.
	\begin{defn} The \textit{essential spectrum} of an operator $A$ is defined as
		\begin{equation*}
			\sigma_{ess}(A)=\{\lambda\in\C: \mbox{Ran}(A-\lambda)\:\mbox{is not closed or} \dim\ker(A-\lambda)=\infty \mbox{ or }\dim\ker(A^*-\overline{\lambda})=\infty\}\:.
		\end{equation*}
	\end{defn}
	\begin{remark}
		Note that we will only consider operators $A$ such that $\dim\ker(A-\lambda)$ and $\dim\ker(A^*-\overline{\lambda})$ are both at most one-dimensional. In this case, we get the following simpler description of the essential spectrum
		\begin{equation}
			\sigma_{ess}(A)=\{\lambda\in\C: \mbox{Ran}(A-\lambda)\:\mbox{is not closed}\}
		\end{equation}
		and in particular, since $\mbox{Ran}(A-\lambda)$ is closed if and only if $\mbox{Ran}(A^*-\overline{\lambda})$ is closed \cite[Chapter I, Thm.\ 5.13]{Kato}, we get $\sigma_{ess}(A)\cap\R=\sigma_{ess}(A^*)\cap\R$.
	\end{remark}
	\begin{prop}
		Let $\lambda\in\R$. If $\lambda\notin\sigma_{ess}(A_{min})$, then, the equation $A_{min}^*g=-g''+V_Rg-iV_Ig
		=\lambda g$ has a non-zero solution in $H^2(\R^+)$.
	\end{prop}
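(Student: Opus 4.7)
My plan is to establish $\ker(A_{min}^*-\lambda)\neq\{0\}$ via a Fredholm-index computation. A preliminary integration by parts, using the conditions $f(0)=f'(0)=0$ defining $\cD(A_{min})$, gives $\cD(A_{min}^*)=H^2(\R^+)$ with action $v\mapsto -v''+(V_R-iV_I)v$, so any element of $\ker(A_{min}^*-\lambda)$ automatically lies in $H^2(\R^+)$. The hypothesis $\lambda\in\R\setminus\sigma_{ess}(A_{min})$ says $A_{min}-\lambda$ is Fredholm, and since $\bar\lambda=\lambda$, its cokernel is isomorphic to $\ker(A_{min}^*-\lambda)$. It therefore suffices to show $\mathrm{ind}(A_{min}-\lambda)=-1$.

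To compute the index, I would first work at points $z\in\C^-$. The dissipativity $\Im\langle u,A_{min}u\rangle=\|V_I^{1/2}u\|^2\geq 0$ for $u\in\cD(A_{min})$ yields the coercive bound $\|(A_{min}-z)u\|\geq|\Im z|\,\|u\|$, so $A_{min}-z$ is injective with closed range. I would then fix the maximally dissipative extension $A_{i,0}$ from \eqref{eq:mdoext} with $h=i$, $k\equiv 0$. Because $k\equiv 0$, this extension acts by the same differential expression as $A_{min}$, so $A_{min}\subset A_{i,0}$, and the quotient $\cD(A_{i,0})/\cD(A_{min})$ is one-dimensional (identified with $\C$ via $f\mapsto f(0)$). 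Since $\C^-\subset\rho(A_{i,0})$, the map $A_{i,0}-z:\cD(A_{i,0})\to L^2(\R^+)$ is a bijection, and restricting this bijection to the codimension-one subspace $\cD(A_{min})$ shows that $\Ran(A_{min}-z)$ has codimension exactly one in $L^2(\R^+)$. Hence $\mathrm{ind}(A_{min}-z)=0-1=-1$, and in particular $\C^-\cap\sigma_{ess}(A_{min})=\emptyset$.

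The final step is to propagate this to $\lambda$. Since $\sigma_{ess}(A_{min})$ is closed and does not contain $\lambda$, for sufficiently small $\eps>0$ the vertical segment $\{\lambda-it:t\in[0,\eps]\}$ lies entirely in $\C\setminus\sigma_{ess}(A_{min})$, where $A_{min}-z$ is Fredholm throughout. Local constancy of the Fredholm index on connected components of the Fredholm region then gives $\mathrm{ind}(A_{min}-\lambda)=\mathrm{ind}(A_{min}-(\lambda-i\eps))=-1$, so $\dim\ker(A_{min}^*-\lambda)\geq 1$ and any nonzero element is the desired $g\in H^2(\R^+)$. I expect the main point to verify carefully is the codimension-one comparison with $A_{i,0}$, which rests on $A_{i,0}$ acting by the same differential expression as $A_{min}$ (ensured by $k\equiv 0$) and on $\cD(A_{min})$ being a proper subspace of $\cD(A_{i,0})$ of codimension exactly one.
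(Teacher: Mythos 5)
Your argument is correct, but it is genuinely different from the one in the paper. The paper argues by contradiction: if $\ker(A_{min}^*-\lambda)=\{0\}$, then $\Ran(A_{min}-\lambda)$ is dense, hence (by $\lambda\notin\sigma_{ess}(A_{min})$) all of $\cH$; combined with injectivity of $A_{min}-\lambda$ (extracted from the integration-by-parts identity \eqref{eq:1} in the proof of Theorem \ref{prop:1}) this gives $\lambda\in\rho(A_{min})$, so by openness of the resolvent set some $\lambda-i\eps$ lies in $\rho(A_{min})$, forcing $A_{min}$ to be maximally dissipative --- contradicting the existence of its proper dissipative extensions. You instead compute the Fredholm index directly: deficiency one in $\C^-$ via comparison with the bijection $A_{i,0}-z$ (your codimension-one argument is sound, since $k\equiv 0$ ensures $A_{min}\subset A_{i,0}$ and $f\mapsto f(0)$ identifies $\cD(A_{i,0})/\cD(A_{min})$ with $\C$), followed by local constancy of the index up to the real point $\lambda$. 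Each approach has its merits: the paper's is shorter and avoids index theory, but leans on the maximality criterion and on injectivity of $A_{min}-\lambda$; yours needs no information about $\ker(A_{min}-\lambda)$ and yields the sharper conclusion $\dim\ker(A_{min}^*-\lambda)=\dim\ker(A_{min}-\lambda)+1$, which is essentially the classical stability of deficiency indices transported to the dissipative setting. Two small points you should make explicit if you write this up: $A_{min}$ is closed (its domain is a closed subspace of $H^2(\R^+)$ and $V$ is bounded), which is needed to pass from the coercive bound $\|(A_{min}-z)u\|\geq|\Im z|\,\|u\|$ to closedness of the range; and the identification of the cokernel with $\ker(A_{min}^*-\overline{\lambda})$ uses precisely that the range is closed, which is what $\lambda\notin\sigma_{ess}(A_{min})$ provides.
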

	\begin{proof}
		Assume that no such solution exists, i.e.\ $\ker(A_{min}^*-\lambda)=\{0\}$. This implies that $\overline{\mbox{Ran}(A_{min}-\lambda)}=\mathcal{H}$. Since $\lambda\notin\sigma_{ess}(A_{min})$, this implies $\mbox{Ran}(A_{min}-\lambda)=\mathcal{H}$. Since from the argument in the proof of Theorem \ref{prop:1} using \eqref{eq:1}, we have $\ker(A_{min}-\lambda)=\{0\}$, this would imply that $(A_{min}-\lambda)$ is boundedly invertible and thus $\lambda\in\rho(A_{min})$. Since resolvent sets are open, there exists an $\varepsilon>0$ such that $(\lambda-i\varepsilon)\in\rho(A_{min})$, thus implying that $A_{min}$ is maximally dissipative, which is a contradiction. Hence, $\mbox{Ran}(A_{min}-\lambda)$ is not closed and thus $\lambda\in\sigma_{ess}(A_{min})$.
	\end{proof}	
	Altogether, we have shown the following result:
	\begin{thm} \label{thm:final}
	 For any $\lambda\in\R\setminus\sigma_{ess}(A_{min})$, there exists a unique maximally dissipative extension $A_\lambda$ which has a one-dimensional reducing selfadjoint subspace corresponding to the eigenvalue $\lambda$.
	\end{thm}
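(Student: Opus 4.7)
The plan is simply to combine the two immediately preceding results and leverage the limit-point/limit-circle dichotomy for uniqueness. Specifically, Theorem \ref{thm:final} is essentially a corollary: the hard work has already been done in Theorem \ref{thm:ev} (which builds the extension from a putative $H^2$-solution of $A_{min}^* g = \lambda g$) and in the Proposition just above (which produces such a solution from the spectral hypothesis). So my first step would be to fix $\lambda\in\R\setminus\sigma_{ess}(A_{min})$ and invoke the preceding proposition to obtain a non-zero $g\in H^2(\R^+)$ with $-g''+V_Rg-iV_Ig=\lambda g$.

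Next, I would feed this $g$ directly into Theorem \ref{thm:ev}. That theorem already manufactures the required extension: it normalizes $g$ to $\eta=\frac{-2i}{g(0)}g$ (after verifying $g(0)\neq 0$ via $\Imag \overline{g(0)}g'(0)\neq 0$), detects that we are in the critical case through \eqref{eq:4}, and then exhibits $A_\lambda = A_{\frac{i}{2}\eta'(0), V_I\eta}$ as a maximally dissipative extension of $A_{min}$ whose one-dimensional subspace $\spann\{\eta\}=\spann\{g\}$ reduces $A_\lambda$ to a selfadjoint operator with eigenvalue $\lambda$.

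For uniqueness, I would appeal to the limit-point property at infinity together with the $H^2$ requirement: the equation $A_{min}^* g = \lambda g$ has at most a one-dimensional $H^2(\R^+)$-solution space, so the reducing eigenspace $\spann\{g\}$ is canonically determined by $\lambda$. Theorem \ref{thm:ev} itself states that once such a $g$ is given, the extension with this reducing subspace is unique (the parameters $h=\frac{i}{2}\eta'(0)$ and $k=V_I\eta$ are forced by requiring $\eta\in\cD(A_{h,k})$ together with Condition (i) of Theorem \ref{thm:cnsa}). Composing these two uniqueness statements yields uniqueness of $A_\lambda$.

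There is no real obstacle here beyond assembling the pieces correctly, since the analytic content has been discharged earlier: the spectral hypothesis $\lambda\notin \sigma_{ess}(A_{min})$ was converted into the existence of an $H^2$-eigenfunction of $A_{min}^*$, and the construction of the extension plus verification of Conditions (i)--(ii) of Theorem \ref{thm:cnsa} was carried out in Theorem \ref{thm:ev}. The only mildly delicate point worth flagging is ensuring that the uniqueness claim is stated at the right level, i.e.\ uniqueness of the maximally dissipative extension having $\lambda$ as a real eigenvalue with the prescribed reducing subspace, rather than uniqueness among all extensions; the former is what the preceding results deliver.
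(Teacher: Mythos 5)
Your proposal is correct and matches the paper exactly: the paper presents Theorem \ref{thm:final} with the phrase ``Altogether, we have shown the following result,'' i.e.\ as the direct combination of the preceding Proposition (which produces the non-zero $H^2(\R^+)$-solution of $A_{min}^*g=\lambda g$ from $\lambda\notin\sigma_{ess}(A_{min})$) with Theorem \ref{thm:ev} (which constructs the unique extension $A_\lambda=A_{\frac{i}{2}\eta'(0),V_I\eta}$ and whose uniqueness rests on the same limit-point argument you cite). No further comment is needed.
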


	\section{Example} As an application, consider a finitely supported potential well of the form $V(x)=i\chi_{(0,1)}(x)$, i.e.
	\begin{equation}
		A_{min}:\quad\cD(A_{min})=\{f\in H^2(\R^+): f(0)=f'(0)=0\},\quad (A_{min}f)(x)=-f''(x)+i\chi_{(0,1)}(x)f(x)\:.
	\end{equation}
	Here, $\chi_{(0,1)}$ is the indicator function over the interval $(0,1)$.
	Firstly, let us argue that $\sigma_{ess}(A_{min})=[0,\infty)$. To see this, consider the extension $A_{\infty,0}$ of $A_{min}$, which is of the form $A_{\infty,0}=S_{\infty,0}+V$, where $S_{\infty,0}$ is the selfadjoint Dirichlet Laplacian on the half-line.  It is well-known that $\sigma_{ess}(S_{\infty,0})=[0,\infty)$. From a direct calculation, it can be seen that $V(S_{\infty,0}-i)^{-1}$ is Hilbert-Schmidt. This implies that $V$ is a relatively compact perturbation of $S_{\infty,0}$ and therefore $\sigma_{ess}(A_{\infty,0})=\sigma_{ess}(S_{\infty,0})=[0,\infty)$. Now, since $\dim(\cD(A_{\infty,0})/\cD(A_{min}))=1$, this implies that $\Ran(A_{min}-\lambda)$ is closed if and only if $\Ran(A_{\infty,0}-\lambda)$ is closed. Consequently, we have $\sigma_{ess}(A_{min})=[0,\infty)$ as well. According to Theorem \ref{thm:final}, for every $\lambda<0$, there is a unique maximally dissipative extension $A_\lambda$ of $A_{min}$ which has a non-trivial selfadjoint subspace corresponding to the eigenvalue $\lambda$. In what follows, we set $\lambda=-\xi^2$, where $\xi>0$.
	
	In order to determine $A_\lambda$, we need to find the function $\eta\in H^2(\R^+)$ with $\eta(0)=-2i$ such that
	\begin{equation}
		-\eta''(x)-i\chi_{(0,1)}(x)\eta(x)=-\xi^2\eta(x)\:.
	\end{equation} 
	We introduce the numbers $\sigma_\pm$, which are given by
	\begin{equation}
		\sigma_\pm=(\xi\pm\sqrt{\xi^2-i})\exp(\pm\sqrt{\xi^2-i})\:.
	\end{equation}
	Then, the solution to this initial value problem is given by
	\begin{equation}
		\eta(x)=\begin{cases}\frac{2i\sigma_-}{\sigma_+-\sigma_-}\exp(\sqrt{\xi^2-i}\cdot x)+\frac{-2i\sigma_+}{\sigma_+-\sigma_-}\exp(-\sqrt{\xi^2-i}\cdot x)\quad &\mbox{if } x\in(0,1)\\ \left[\frac{2i\sigma_-}{\sigma_+-\sigma_-}\exp(\sqrt{\xi^2-i}+\xi)+\frac{-2i\sigma_+}{\sigma_+-\sigma_-}\exp(-\sqrt{\xi^2-i}+\xi)\right]\exp(-\xi\cdot x)\quad &\mbox{if } x\in(1,\infty)\end{cases}\:.
	\end{equation}
	Let us now determine $h:=\frac{i}{2}\eta'(0)$ and $k(x):=\chi_{(0,1)}(x)\eta(x)$ in order to construct the maximally dissipative extension $A_\lambda=A_{h,k}$. We find
	\begin{align}
		h&=\frac{\sigma_-+\sigma_+}{\sigma_--\sigma_+}\sqrt{\xi^2-i}\qquad\mbox{    and }\notag\\ k(x)&=\begin{cases}\frac{2i\sigma_-}{\sigma_+-\sigma_-}\exp(\sqrt{\xi^2-i}\cdot x)+\frac{-2i\sigma_+}{\sigma_+-\sigma_-}\exp(-\sqrt{\xi^2-i}\cdot x)\quad &\mbox{if } x\in(0,1)\\ 0\quad &\mbox{if } x\in(1,\infty)\end{cases}\:,
	\end{align}
	which parametrise the special maximally dissipative extension $A_{h,k}$ of $A_{min}$ such that it has a non-trivial reducing selfadjoint subspace spanned by $\eta$ corresponding to the eigenvalue $\lambda=-\xi^2$.
	

\end{document}